 \newtheorem{theorem}{Theorem}[section]
  \newtheorem{corollary}[theorem]{Corollary}
  \newtheorem{question}[theorem]{Question}
 \newtheorem{lemma}[theorem]{Lemma}
  \newtheorem*{claim}{Claim}
  \newtheorem{proposition}[theorem]{Proposition}
   \theoremstyle{definition}
\newtheorem{definition}[theorem]{Definition}
\theoremstyle{remark}
\newcommand{\HH}{\ensuremath{{\mathbb{H}}}}
\newcommand{\Z}{\ensuremath{{\mathbb{Z}}}}
\newcommand{\R}{\ensuremath{{\mathbb{R}}}}
\newcommand{\Q}{\ensuremath{{\mathbb{Q}}}}
\newcommand{\InfSum}{\ensuremath{ \bigoplus \Z_n}} 
\newcommand{\InfSumtwo}{\ensuremath{ \bigoplus \Z_2}}
\begin{document}

\title[Pattern preserving quasi-isometries in lamplighter groups]{Pattern preserving quasi-isometries in lamplighter groups and other related groups}

\author{Tullia Dymarz}
\address{ Tullia Dymarz,
Department of Mathematics,
University of Wisconsin-Madison, 480 Lincoln Drive,  Madison, WI 53706}
\email{dymarz@math.wisc.edu}

\author{Beibei Liu}
\address{Beibei Liu, Department of Mathematics, Ohio State University, 100 Math Tower, 231 W 18th Ave, Columbus, OH 43210}
\email{liu.11302@osu.edu}
\author{Nata\v{s}a Macura}
\address{Nata\v{s}a Macura (Emerita) Trinity University 
1 Trinity Place
San Antonio TX 78212}
\email{natasa.nmacura@gmail.com}
\author{Rose Morris-Wright}
\address{Rose Morris-Wright, Department of Mathematics and Statistics, Middlebury College, Warner Hall, Middlebury, VT 05753} \email{rmorriswright@middlebury.edu} 

\begin{abstract}
In this paper we explore the interplay between aspects of the geometry and algebra of three families of groups of the form $B\rtimes \Z$, namely Lamplighter groups, solvable Baumslag-Solitar groups and lattices in SOL. In particular we examine what kind of maps are induced on $B$ by quasi-isometries that coarsely permute cosets of the $\Z$ subgroup.  By the results of Schwartz(1996) and Taback(2000) in the lattice in SOL and solvable Baumslag-Solitar cases respectively such quasi-isometries induce affine maps of $B$. We show that this is no longer true in the lamplighter case but the induced maps do share some features with affine maps.

\end{abstract}
\maketitle
\section{Introduction} \label{Section: Introduction}

A quasi-isometry is a map between metric spaces that preserves the large scale geometry, only allowing for bounded discontinuities or for distances to be scaled by a bounded amount. In geometric group theory, we aim to study groups via this large scale geometry. We consider a finitely generated group as a metric space by endowing each group element with a length corresponding to the minimal number of generators in a word representing the given group element. Two different presentations for the same group induce different word metrics but all such metrics are quasi-isometric. This means that differences in large scale geometry correspond to non-isomorphic groups. However quasi-isomorphic metric spaces do not always come from isomorphic groups. Gromov's program aims to classify all finitely generated groups up to quasi-isometry (See \Cref{Section: Background} for technical details on quasi-isometries and word metrics.)

 Often a first step  in understanding which groups are quasi-isometric to a given group $G$ is to study the structure of self quasi-isometries of $G$. A priori, self quasi-isometries may seem unwieldy so to get a handle on them one looks for structures that they preserve. For example,
 given a subgroup $H$, we say that a self quasi-isometry $\Psi:G\to G$ \emph{coarsely permutes} the cosets of $H$ if there exists a permutation $\sigma:G/H\to G/H$ and a $C'\geq 0$ such that $d_{\mathcal{H}}(\Psi(gH), \sigma(gH))<C'$ where $d_{\mathcal{H}}$ denotes Hausdorff distance. We call such quasi-isometries \emph{pattern preserving}.
 Even if not all self quasi-isometries  coarsely permute the cosets of a given subgroup (i.e. are pattern preserving), one can try to understand how much smaller is the set of self quasi-isometries that do. Answering these types of questions can often lead to rigidity results for groups built out of $G$ through graphs of group constructions (see \cite{Cashen}) and has also become a question of independent interest.  For a nice survey on some results and questions in this direction see \cite{Eduardo}.

In this paper we focus on three classes of groups of the form $G=B \rtimes \Z$ that have similar geometry: 
\begin{enumerate}

\item {\bf Lamplighter group.} $L_n:= \InfSum\rtimes \Z$ where semi-direct product is given by the action shifting the index.
\item {\bf Lattices in SOL.} $\Gamma_A: = \Z^2 \rtimes_A \Z$ where the action is by multiplication by a hyperbolic matrix  $A \in SL(2,\Z)$.
\item {\bf Solvable Baumslag-Solitar group.}  $BS(1,n) := \Z[\frac{1}{n}] \rtimes \Z$ where the action is by multiplication by $n$.
\end{enumerate}
In each case we call $B$ the \emph{base group} and the subgroup generated by $\Z$ the \emph{vertical subgroup}.
By theorems of Farb-Mosher \cite{Farb1998,Farb1999} in the solvable Baumslag-Solitar group case and Eskin-Fisher-Whyte \cite{Eskin2012,Eskin2013} in the other two cases, all quasi-isometries coarsely permute left cosets of $B$. 

In this paper we are interested in quasi-isometries  that coarsely permute the left cosets of the vertical subgroup $\Z$. Note that the space of cosets of the vertical subgroup can be identified with the base group $B$ and by assumption any pattern preserving quasi-isometry $\Psi$ induces a permutation $\psi:B \to B$. 
 A priori, this induced map does not need to have any structure. However by work of Schwartz \cite{Schwartz1996} in the case of $\Gamma_A$ and Taback \cite{Taback2000} in the case of $BS(1,n)$ it turns out that the induced maps on $B=\Z^2$ and $B=\Z[\frac{1}{n}]$ respectively are algebraic in nature. In particular they satisfy the property that for any $a,b,c,d\in B$ with $a+c=b+d$  the image under the induced map $\psi$ satisfies
\[\psi(a)+\psi(c)=\psi(b)+\psi(d)\] where `+' represents the operation in the group $B$. If we imagine the four group elements $a,b,c,d$ as the sides of a quadrilateral then we can view the condition that $a+c=b+d$ as implying that the quadrilateral is a parallelogram. We therefore call the above condition on $\psi$ \emph{parallelogram preserving} since it states that $\psi$ takes parallelograms in $B$ to parallelograms.

In the case of $BS(1,n)$ and $\Gamma_A$, the map $\psi$ being parallelogram preserving is equivalent to $\psi$ being affine on $\Z[1/n]$ or $\Z^2$ respectively.
If we consider parallelogram preserving maps on B in  the lamplighter case where $B= \InfSum$ we already encounter different behavior from the other two groups. For both $BS(1,n)$ and $\Gamma_A$ the action of the vertical subgroup $\Z$ on their respective base groups $B$ is affine ($x \to nx$ in the $BS(1,n)$ case and $x \to Ax$ in the $\Gamma_A$ case) while in the lamplighter group the shift map is not. We are interested in studying a set of maps that includes all isometries induced by the group action, so in order to allow for left multiplication by group elements in all three cases, we call a map of $\InfSum$ \textit{generalized affine} if it is the composition of an affine map and a vertical shift map $(x_i)\mapsto (x_{i+k})$ for some constant $k\in \Z$. With this definition, left multiplication by a group element in $L_n$ will induce a generalized affine map on $\InfSum$. 

Our main question is whether or not all pattern preserving quasi-isometries are generalized affine. This is true for lattices in SOL and for Baumslag-Solitar groups by  \cite{Schwartz1996,Taback2000} but the situation is more complicated for Lamplighter groups. This paper looks at the proofs of these known results from a geometric standpoint and discusses how the shared geometry of these three groups might lead one to expect that the lamplighter group has similar behavior. It turns out, however, that pattern preserving quasi-isometries behave  differently in lamplighter groups and this paper describes their behavior and explains why it is different from the first two cases. 

 In order to have any hope of finding conditions in that lamplighter group $L_n$ that will guarantee generalized affine behavior,  we must first restrict our attention to $n=2$.
For $L_n$ with $n \geq 3$ there are examples of pattern preserving quasi-isometries (and even isometries if we chose our generating set correctly) that do not induce generalized affine maps on $B= \InfSum$.
For example at any index simply take a permutation of $\Z_n$ that does not come from an affine map of $\Z_n$.  The situation for $n=2$ is more interesting.  
First we examine pattern preserving isometries. Since different word metrics are not isometrically equivalent we fix a generating set for $L_2$ whose Cayley graph is the Diestel-Leader graph $DL(2,2)$ (see Section \ref{Section: Background} for details).

\begin{theorem}\label{thrm:affineisometry} If $\Psi:L_2\to L_2$ is a pattern preserving isometry then the induced map $\psi$ comes from left multiplication by a group element (up to composition with the map induced by inversion $(x_i)_i \mapsto (x_{-i})_i$), and hence is a generalized affine map.
\end{theorem}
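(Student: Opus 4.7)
The plan is to leverage the rigid structure of the Cayley graph $DL(2,2)$, the horocyclic product of two trivalent trees $T_1, T_2$. Left cosets of the vertical subgroup $\Z$ in $L_2$ correspond precisely to the bi-infinite vertical geodesics in $DL(2,2)$ that asymptotically coincide with the identity geodesic $\gamma_0 = e\Z$ in both directions---this reflects the finite-support condition built into $B = \InfSumtwo$. Since any two distinct vertical geodesics in $DL(2,2)$ have distinct asymptotic end-pairs and hence infinite Hausdorff distance, and $\Psi$ is a genuine isometry, the coarse permutation of cosets is actually an honest permutation of vertical geodesics.

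First I would normalize. By pre-composing $\Psi$ with left multiplication by $\Psi(e)^{-1}$, which is itself a pattern-preserving isometry inducing a generalized affine map, I may assume $\Psi$ fixes the identity vertex $e$. Every isometry of $DL(2,2)$ fixing $e$ either preserves or swaps the two tree factors; the swap is realized by the isometry corresponding to the map $(x_i) \mapsto (x_{-i})$ on $B$. Composing with the latter if necessary, I reduce to the case where $\Psi$ fixes $e$ and respects the two tree factors. Pattern preservation then sends $\gamma_0$ to itself, and, being an orientation-preserving self-isometry of the bi-infinite geodesic $\gamma_0$ fixing the point $e$, $\Psi$ must act as the identity on $\gamma_0$.

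It remains to show that the induced map $\psi: B \to B$ is then the identity. Each coset $\gamma_b$ is a bi-infinite vertical geodesic that differs from $\gamma_0$ only on a bounded range of levels determined by the support of $b$: it branches off $\gamma_0$ at the lowest non-zero index of $b$ and re-merges at the highest. Because $\Psi$ fixes $\gamma_0$ pointwise, the image $\Psi(\gamma_b)$ is a vertical geodesic branching off $\gamma_0$ at the same two levels. I would then argue, inductively on the ``width'' of the support of $b$, that combining the pattern-preservation constraint (sending this countable sub-family of cosets to itself) with the isometric constraint (preserving mutual distances among these cosets) forces $\Psi(\gamma_b) = \gamma_b$, and hence $\psi(b) = b$ for all $b \in B$.

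The main obstacle is this last step. The full isometry group of $DL(2,2)$ is vastly larger than $L_2$; its vertex stabilizers contain uncountably many branch-swapping automorphisms of the two underlying trees that do not come from $L_2$ and that generically fail to preserve the countable sub-collection of finite-support vertical geodesics. The pattern-preservation hypothesis is precisely the constraint that rules out all such extraneous isometries, beyond the tree-swap already accounted for by the inversion map. Making this rigorous requires a careful combinatorial bookkeeping of how branch-swap automorphisms act on the support data of lamp configurations, and checking that any non-trivial such action either produces an unbounded-support configuration (violating pattern preservation) or a distance discrepancy somewhere (contradicting the isometry assumption).
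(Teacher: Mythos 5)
Your normalization step is essentially the paper's: compose with left multiplication (and with the factor-swapping/inversion isometry if needed) to reduce to a height- and orientation-preserving isometry that fixes the identity coset, and note that for an honest isometry the coarse permutation of cosets is an actual permutation of vertical geodesics. The problem is that everything after that --- the entire content of the theorem --- is left as a plan rather than a proof. You say you ``would then argue, inductively on the width of the support,'' that pattern preservation plus the isometry condition forces $\Psi(\gamma_b)=\gamma_b$, and you yourself identify this as ``the main obstacle'' requiring ``careful combinatorial bookkeeping.'' That bookkeeping is exactly what the paper supplies and what your proposal omits: the paper takes a non-fixed vertex $a$ in one tree factor whose parent $c$ is fixed (so the two children $a,b$ of $c$ are swapped), pairs the edge $c$--$a$ with a fixed edge of the identity coset in the other tree factor, and obtains a height-one quadrilateral $(c,d),(b,f),(c,e),(a,f)$ in $DL(2,2)$ with two fixed and two permuted vertices. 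Since exactly two cosets of $\langle t\rangle$ pass through any height-one quadrilateral, a pattern-preserving isometry must either fix both or swap them, and either option contradicts the configuration --- hence $\Psi$ is the identity. (The paper also gives an algebraic alternative, Proposition \ref{prop:algisom}, generating all parallelograms from support-one elements.) Without some argument of this kind, your proposal does not establish the theorem.

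A secondary but relevant point: the geometric picture you base your intended induction on is not correct in $DL(2,2)$. A coset geodesic $\gamma_b$ with $b\neq 0$ never meets $\gamma_0$, and $d(\gamma_b(t),\gamma_0(t))$ grows linearly in both directions; it is only the \emph{projections} to the two tree factors that merge with those of $\gamma_0$ (one factor going up, the other going down, at heights determined by $supp(b)$). So statements like ``$\gamma_b$ differs from $\gamma_0$ only on a bounded range of levels'' or ``branches off and re-merges'' conflate the geodesic with its tree-factor projections, and cosets are certainly not ``the vertical geodesics asymptotically coinciding with $\gamma_0$ in both directions'' (no two distinct vertical geodesics asymptotically coincide). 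The corrected, factor-by-factor version of this picture is precisely what makes the paper's quadrilateral argument work, so repairing your sketch would in effect reproduce it.
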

When we generalize to arbitrary quasi-isometries on $L_2$, we show that the induced maps are much less rigid than in $BS(1,n)$ or  $\Gamma_A$.
 
\begin{theorem} \label{thrm:counterexamples} There are pattern preserving quasi-isometries of $L_2$ that do not induce generalized affine maps of $\InfSumtwo$.
\end{theorem}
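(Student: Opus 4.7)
The plan is to construct a pattern preserving quasi-isometry $\Psi: L_2 \to L_2$ of the form $\Psi(\ell, h) = (\psi(\ell), h)$, where $\psi: \InfSumtwo \to \InfSumtwo$ is a chosen nonlinear involution. This reduces the theorem to three independent checks on $\psi$: that $\Psi$ is a quasi-isometry, that it is pattern preserving, and that $\psi$ is not generalized affine.

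For the explicit $\psi$, I would use a map that flips one coordinate conditionally on two others, for instance
\[
\psi(\ell) \;=\; \ell + (\ell_1 \cdot \ell_2)\, e_3,
\]
where $\ell_i$ denotes the $i$th coordinate of $\ell$ and $e_3$ is the standard basis vector at index $3$. This $\psi$ is an involution (hence a bijection); it fixes the three elements $0$, $e_1$, $e_2$ individually but sends $e_1 + e_2$ to $e_1 + e_2 + e_3$, so it is not $\Z_2$-linear and therefore not affine. Moreover, composing $\psi$ with any reindexing $(x_i) \mapsto (x_{i-k})$ yields a map of the same nonlinear form, with the conditional bits merely relabeled, so $\psi$ cannot be made affine by composition with a shift, and hence is not generalized affine.

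Verifying that $\Psi$ is a quasi-isometry is then straightforward from the standard formula for the word metric in $L_2$: the distance between $(\ell, h)$ and $(\ell', h')$ equals the cardinality of the support of $\ell + \ell'$ plus the length of the shortest walk starting at $h$, ending at $h'$, and visiting every index of that support. The crucial observation is that $\psi(\ell) + \psi(\ell')$ differs from $\ell + \ell'$ by at most the addition or removal of the single coordinate $e_3$, and this modification only occurs when $\ell$ and $\ell'$ already differ at index $1$ or index $2$. Since $1$ and $2$ are within bounded distance of $3$, both the cardinality and the minimal walk length change by at most a uniform additive constant. Thus $\Psi$ is a $(1, C)$-quasi-isometry for some constant $C$. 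Pattern preservation is immediate: $\Psi$ sends the coset $\{(\ell, h) : h \in \Z\}$ exactly to $\{(\psi(\ell), h) : h \in \Z\}$, inducing the permutation $\psi$ on $B$.

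The main delicate point is the bounded-additive-error property for the quasi-isometry. A priori, changing a single coordinate of a configuration could force an arbitrarily long lamplighter detour, but the conditional definition guarantees that the walk already passes near the affected index whenever the modification occurs, keeping the distortion uniformly bounded. Once this is in place, the theorem follows by combining the QI and pattern preserving checks with the explicit failure of affineness on $\{0, e_1, e_2, e_1+e_2\}$.
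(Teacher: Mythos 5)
Your proposal is correct, and your map is in fact a member of the family the paper itself uses: applying the permutation $(a,b,c)\mapsto(a,b,c+ab)$ of $\{0,1\}^3$ on a fixed window of three indices and the identity elsewhere is exactly one of the ``window permutation'' maps $\psi$ constructed in \Cref{subsec:counterexample} (the paper's explicit example is the transposition $100\leftrightarrow 111$ with $m=3$). Where you genuinely diverge is in how the quasi-isometry is produced: the paper checks that any such $\psi$ is biLipschitz on both the upper and lower boundaries (with constant at most $2^m$) and then invokes the identification of quasi-isometries of $DL(2,2)$ with pairs of biLipschitz maps of $\Q_2$ (Eskin--Fisher--Whyte), so the quasi-isometry and its pattern preservation come from the boundary machinery; you instead define $\Psi(\ell,h)=(\psi(\ell),h)$ directly and estimate the lamplighter word metric, using that the support of $\ell+\ell'$ is altered only at index $3$ and only when it already meets $\{1,2\}$, which gives a $(1,C)$-quasi-isometry and \emph{exact} permutation of the cosets of $\langle t\rangle$. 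Your route is more elementary and self-contained (no appeal to boundary rigidity results, and the metric formula you use differs from the $DL(2,2)$ generating set only up to quasi-isometry, which is all the statement requires); the paper's route buys a whole family of examples at once and sets up the biLipschitz-boundary framing behind its closing question. You also handle the ``generalized'' part of generalized affine explicitly, checking that pre- or post-composing with a shift cannot restore additivity since shifts are automorphisms of $\InfSumtwo$ and your map fails additivity on $\{0,e_1,e_2,e_1+e_2\}$; the paper only records the failure of affineness, so this is a small point in your favor.
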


In \Cref{subsec:counterexample}, we explicitly construct a family of maps $\psi: \InfSumtwo\to \InfSumtwo$  that are not affine, but which induce pattern preserving quasi-isometries on $L_2$.
Despite the less rigid structure implied by \Cref{thrm:counterexamples}, we are still able to show that quasi-isometries that preserve cosets of the vertical subgroup are ``affine on the large scale." That is they are affine when only applied to ``large" enough parallelograms. These notions are developed in Section \ref{subsec:eMquads} and the result is proved as Proposition \ref{Prop:eMlamplighter}
but the key ingredient, captured in the Lemma \ref{Theorem: Main} below, boils down to the fact that all large enough quadrilaterals must be parallelograms.

\begin{lemma}\label{Theorem: Main} 
Let $S>1$ and for $(x_i) \in \InfSumtwo$ denote $supp((x_i))$ to be the smallest interval containing all indices $i$ for which $x_i \neq 0$. Suppose that $a,b,c,d \in \InfSumtwo$ satisfy the following

\begin{enumerate}
\item $|supp(b-a)|<S, |supp(c-a)|<S$
\item $ |supp(b-c)|>2S, |supp(d-a)|>2S$
\end{enumerate}
Then $a+b=c+d$.
\end{lemma}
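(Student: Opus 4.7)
The plan is to exploit that $\InfSumtwo$ is $2$-torsion, so $a+b=c+d$ is equivalent to $a+b+c+d=0$, and to analyze how the interval-support function interacts with coordinate-wise XOR addition. The key combinatorial fact is that $supp(x+y)$ is always contained in the smallest interval containing $supp(x)\cup supp(y)$, with equality forced at any index where exactly one of $x,y$ is nonzero.

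First, I would translate by $-a$ to reduce to the case $a=0$, since all the hypotheses and the conclusion depend only on pairwise differences. The hypotheses rewrite as $|supp(b)|,|supp(c)|<S$, $|supp(b+c)|>2S$, and $|supp(d)|>2S$, and the goal becomes $d=b+c$.

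Second, I would pin down the locations of $supp(b)$ and $supp(c)$. Each lies in an interval of length less than $S$, yet $supp(b+c)$ spans more than $2S$; since $supp(b+c)\subseteq supp(b)\cup supp(c)$ and each summand fits in an interval shorter than $S$, the only way the combined spanning interval can exceed $2S$ is if the two small support intervals are disjoint and separated by a gap of at least one index. Consequently, the leftmost nonzero index of $b+c$ comes from one of $b,c$ and the rightmost from the other, which rigidly determines the shape of $b+c$: a vector supported on two disjoint short intervals whose combined spanning interval has length exceeding $2S$.

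The main obstacle is upgrading the bare size hypothesis $|supp(d)|>2S$ to the identity $d=b+c$. My plan is a case analysis on where $supp(d)$ sits relative to the two disjoint short support intervals of $b$ and $c$ identified above: if $supp(d)$ were displaced from those two intervals, or if it overlapped only one of them, then comparing with $b+c$ would force one of the short-difference bounds in hypothesis (1) to fail, leaving only the configuration $d=b+c$. I expect the bookkeeping here to be the hardest part, since $|supp(d)|>2S$ alone gives only a length lower bound and not a position, so the argument must couple the $d$-condition with the rigid placement of $supp(b)$ and $supp(c)$ from the previous step. Once $d=b+c$ is forced, undoing the translation gives $d-a=(b-a)+(c-a)$, equivalently $a+b=c+d$.
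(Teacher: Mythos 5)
Your reduction to $a=0$ via $2$-torsion and your observation that $supp(b)$ and $supp(c)$ must be disjoint short intervals separated by a large gap are fine, and they match the first half of the paper's argument (which proves $supp(a-b)\cap supp(a-c)=\emptyset$ from exactly these hypotheses). The gap is in your final step, and it cannot be repaired as planned: you propose to rule out wrong placements of $supp(d)$ by showing they "would force one of the short-difference bounds in hypothesis (1) to fail," but hypothesis (1) only constrains $b-a$ and $c-a$ and says nothing about $d$, so no configuration of $d$ can contradict it. Indeed, under the hypotheses as literally stated the conclusion is false: take $S=2$, $a=0$, $b=e_0$, $c=e_{10}$ (a single lamp at index $0$, resp.\ $10$), and $d=e_0+e_{7}$. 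Then $|supp(b-a)|=|supp(c-a)|=1<S$, $|supp(b-c)|>2S$, $|supp(d-a)|>2S$, yet $d\neq b+c$, i.e.\ $a+b\neq c+d$. So the step you flagged as "the hardest part" is not bookkeeping; it is unprovable from the stated data.

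What is missing are the remaining two side conditions of the $(\epsilon,M)$-quadrilateral, namely $|supp(d-b)|<S$ and $|supp(d-c)|<S$. The paper's own proof (the Claim inside Proposition \ref{Prop:eMlamplighter}, for $(2^S,2^{2S})$-quadrilaterals) uses them: by the same argument as for $supp(a-b)\cap supp(a-c)=\emptyset$, applied after rotating the quadrilateral, one also gets $supp(b-a)\cap supp(b-d)=\emptyset$, $supp(c-a)\cap supp(c-d)=\emptyset$ and $supp(d-b)\cap supp(d-c)=\emptyset$; then coordinatewise, if $a_i\neq b_i$ then $i\in supp(a-b)$ forces $i\notin supp(a-c)$ and $i\notin supp(b-d)$, so $a_i=c_i$ and $b_i=d_i$, giving $a_i+b_i=c_i+d_i$ in every case. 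These extra side conditions are precisely what pin down $d$ (in your normalization, they say $d$ agrees with $b$ outside a short interval and with $c$ outside a short interval), which is the information your case analysis lacks. So either add those hypotheses and finish along these lines, or note that the statement as quoted is not provable.
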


This result is analogous to the first step of the proofs in \cite{Schwartz1996,Taback2000} in the $\Gamma_A$ and $BS(1,n)$  cases and follows from the common geometry of all three groups. 
We will describe the common geometry  shared by all three groups in \Cref{Section: Background}. In \Cref{Section: BSandSOL}, we will compare the proofs of the results from Taback for $BS(1,n)$ \cite{Taback2000}, and Schwartz for lattices in SOL \cite{Schwartz1996}, with a focus on how these proofs reflect this shared geometry. Finally, in \Cref{Section: MainResults} we pinpoint where the arguments of Taback and Schwartz break down in the lamplighter case and prove the main results, Theorem \ref{thrm:affineisometry}, \ref{thrm:counterexamples}, and   \ref{Theorem: Main}.

\vspace{5mm}

 \textbf{Acknowledgment}: The first author was partially supported by NSF CAREER 1552234. The second author is partially supported by the NSF grant DMS-2203237. The authors would like to thank the organizers of Women in Groups, Geometry and Dynamics funded by NSF CAREER 1651963 for giving them the opportunity to come together to work on this project. The first author would also like to thank Jennifer Taback and Kevin Whyte for many useful conversations.

\section{Background} \label{Section: Background}

\subsection{Lamplighter groups, lattices in SOL, and Baumslag-Solitar groups}
 We are interested in three different types of groups, all of which are commonly studied in geometric group theory. Below we establish notation and define each group as a semi-direct product of some base group $B$ with $\Z$.

 {\color{red}}
 \subsubsection{Lamplighter groups} 
 Let $\Z_n$ be a finite cyclic group with $n$ elements. You can apply this construction for any finite group, but for simpilicity we will focus on $\Z_n$. Suppose that we have a bi-infinite sequence of lamps, one for each integer on the real line. Each lamp has $n$ settings corresponding to the $n$ elements in $\Z_n$. Each element $g$ of the lamplighter group, $L_n$, is written $g=((x_i),k)$, where $(x_i)\in\InfSum$ represents the locations of finitely many lamps on non-zero settings, and $k\in \Z$ represents the location of the lamplighter. 

To compose two elements, we think of the element $g=((x_i),k)$ as a set of instructions, telling the lamplighter to start at position 0, go flip the switch to setting $x_i$ at each position $i$, where $x_i\neq 0$ and then end at position $k$. Composition in the group can be thought of as concatenating two sets of such instructions, such that lamplighter begins the second set of instructions at position $k$. So if $g=((x_i),k)$ and $h=((y_i),\ell)$ then 
\[gh=((x_i+y_{i-k}),k+\ell)\]

The group $L_n$ is then a semi-direct product $\InfSum \rtimes \Z$. We will use a presentation of this group with generators $t,at$ where $t$ represents the instruction for the lamplighter to move one step in the positive direction without changing the lamp, each $x$ corresponds to a generator of $\Z_n$, so $at$ represents the instructions for the lamplighter to change the setting of the current lamp accordingly and then move one step in the positive direction. This gives the following presentation. 

\[L_n=\langle t,at\mid a^n=1, [t^ia t^{-i}, t^ja t^{-j}] \quad \forall i,j\in \Z\rangle\]

Notice that this presentation has infinitely many relations, and in fact the lamplighter group is not a finitely presented group. The Cayley graph given by this presentation is the Diestel-Leader graph discussed below. 

\subsubsection{Lattices in SOL}
Consider the semi-direct product $ \R^2 \rtimes \R$  where $\R$ acts on $\R^2$ by $(x,y) \to (e^{t}x, e^{-t}y)$. This is the three-dimensional solvable Lie group $SOL$.
Let $A\in SL(2, \Z)$ be  diagonalizable with no eigenvalues on the unit circle, and  $\Gamma_{A}=\mathbb{Z}^{2}\rtimes \Z$ where $\Z$ acts on $\Z^{2}$ by powers of $A$. We can see that $\Gamma_{A}$ is a lattice in $SOL$ with the embedding  given by $(x,y,z) \mapsto (Q(x,y),z)$ where $QAQ^{-1}$ is a diagonal matrix. The quotient of $SOL/\Gamma_A$ is compact so that $\Gamma_A$ is a uniform lattice in $SOL$.

\subsubsection{Solvable Baumslag-Solitar groups}

Solvable Baumslag-Solitar groups are groups given by the following presentation.

\[BS(1,n)=\langle t,a \mid tat^{-1}=a^n\rangle.\]

Alternatively, we can also view $B(1, n)$ as the group of affine maps on $\Z[\frac{1}{n}]$, with $a$ acting as $i\rightarrow i+1$ and $t$ as $i\rightarrow \frac{1}{n}i$, which determines the short exact sequence 
$$1\rightarrow \Z[\tfrac{1}{n}]\rightarrow BS(1, n)\rightarrow \mathbb{Z}\rightarrow 1.$$
Thus, we can also identify $B(1, n)=\mathbb{Z}[\frac{1}{n}]\rtimes \Z $.

\subsection{Quasi-isometries}
\begin{definition}[Quasi-isometry] For $K\geq 1, C\geq 0$ and $(X,d_X), (Y,d_y)$ metric spaces we say that $f:(X,d_X) \to (Y,d_Y)$ is a $(K,C)$ \emph{quasi-isometry} if for all $x,x' \in X$
\begin{enumerate}
\item $-C + \frac{1}{K} d_X(x,x') \leq d_Y(f(x),f(x')) \leq K d_X(x,x') + C$
\item for all $y \in Y$ there is $x \in X$ such that $d_Y(f(x),y ) \leq C$.
\end{enumerate}
\end{definition} 

If only the first condition is satisfied we say that $f$ is a $(K,C)$ \emph{quasi-isometric embedding} and if only the second condition is satisfied we say that $f$ is \emph{coarsely surjective}. 
We say that two spaces are \emph{quasi-isometrically equivalent} if for some $K\geq 1$ and $C\geq 0$ there is a $(K,C)$ quasi-isometry between them and in most cases we don't specify the constants $K$ and $C$. A \emph{coarse inverse} of a quasi-isometry $f:X \to Y$ is a quasi-isometry $\bar{f}:Y \to X$ such that both  $d_{sup}(f\circ \bar{f} , Id_Y) < \infty$ and $d_{sup}(\bar{f}\circ f , Id_X) < \infty$ where $d_{sup}$ is the sup metric and $Id_X, Id_Y$ are the identify maps on $X$ and $Y$ respectively. More generally we can define an equivalence class on self-quasi-isometries of a metric space $X$ where for $f,g: X \to X$ we set $f\sim g$ if $d_{sup}(f,g)< \infty$. The set of these equivalence classes is a group under the composition of representatives which we call 
 the \emph{quasi-isometry group} $QI(X)$.
 
 In the special case when $C=0$ then we say that $f$ is a \emph{biLipschitz} map and if in addition $K=1$ then $f$ is an \emph{isometry}. We can also define the group of self-biLipschitz maps of a metric space $Bilip(X)$ or self-isometries $Isom(X)$ but for these we do not need to look at equivalence classes and in general these groups do not embed into $QI(X)$. 
 
 \subsubsection{Finitely generated groups as metric spaces.} A finitely generated group can be treated as a metric space by fixing a finite generating set  $S$ and declaring $d_S(g,h)=\| g^{-1} h \|_S$ where $\| x \|_S$ counts the minimum number of generators of $S$ or their inverses  needed to write the word $x$. This distance depends on $S$ but all such metrics are \emph{quasi-isometric} in that  if $S$ and $S'$ are both finite generating sets for $G$ then the identity map $Id: (G,d_S) \to (G,d_{S'})$ is a quasi-isometry.

\subsubsection{Pattern preserving quasi-isometries}
As outlined in the introduction if a quasi-isometry $\Psi:G \to G$ coarsely permutes left cosets of a certain subgroup $H \leq G$ then we can call it pattern preserving. In this paper we focus on the case of $G=B \rtimes \Z$ and $H=\Z$.

   \begin{definition} If $G=B\rtimes \Z$ is a semi-direct product, we say that a quasi-isometry $\Psi: G \to G$ is a \textit{pattern preserving quasi-isometry} if it coarsely permutes the left cosets of $H=\Z$.
      \end{definition}

\subsection{Model spaces and Horocyclic products.}
 
 \begin{definition} We say that a proper geodesic metric space $X$ is a \emph{model space} for a finitely generated group $G$ if $G$ acts on $X$ properly discontinuously and cocompactly by isometries. 
 \end{definition}
 
By the Fundamental Lemma of geometric group theory if $X$ is a model space for $G$ then $G$ with any word metric is quasi-isometric to $X$. 
 Any Cayley graph of $G$ with respect to a finite generating set is a model space for $G$ and if $G$ is a cocompact lattice in a Lie group then this Lie group with any left invariant Riemannian metric is also a model space for $G$.
 
 For the three types of groups we are interested in (lamplighter groups, solvable Baumslag-Solitar groups, and $\Gamma_A$ lattices in SOL), we can construct model spaces as \textit{horocylic products}. 
 
 \begin{definition}Given two metric spaces $X_1,X_2$, each equipped with a surjective continuous function $h_i:X_i\to \R$, called the \textit{height functions}, the \textit{horocyclic product} is a subset of the direct product $X_1 \times X_2$ given by 
 \[X_1\times_h X_2:= \{(x,y)\in X_1\times X_2|h_1(x)+h_2(y)=0\}.\]
 \end{definition}

 In this paper we will  consider at most the case when $X_1,X_2$ are $CAT(-1)$ spaces and $h_i$ are Busemann functions and most often the case when the $X_i$ are either regular trees or a hyperbolic plane.
 We often draw the two spaces $X_1$ and $X_2$ so that the height function for $X_1$ corresponds to moving upward on the page and the height function for $X_2$ corresponds to moving downward on the page. Using this visualization, points in the horocylic product correspond to pairs of points at the same height on the page (see \Cref{fig_horocylicProduct}. We can then define a height function $h: X_1\times_h X_2\to \R$ given by $h_1=-h_2$. 

\begin{figure}
    \centering
    \includegraphics[scale=0.16]{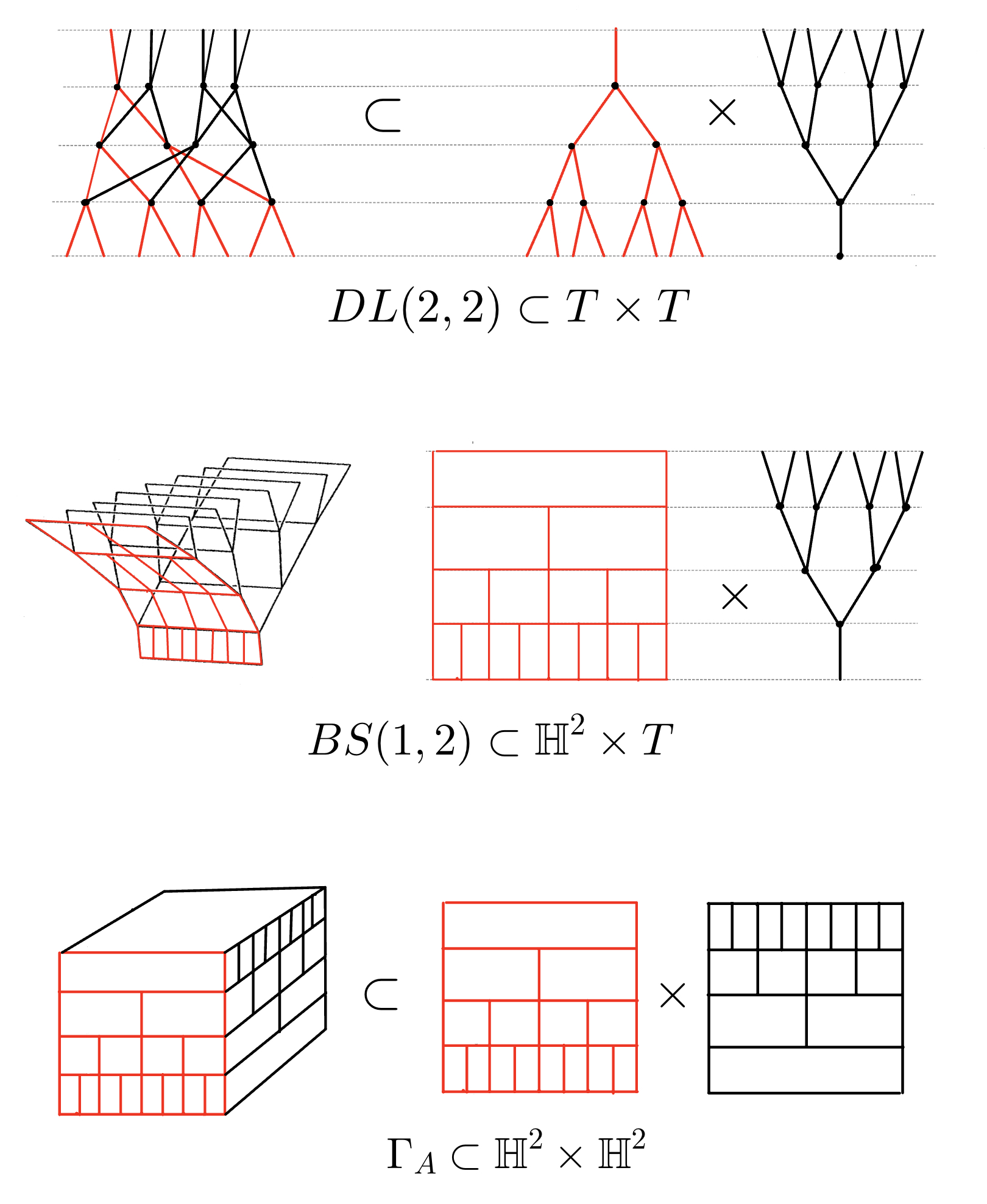}
    \caption{\emph{Top:} The Diestel-Leader graph as a horocyclic product two trees.\\
    \emph{Middle:} The model space for $BS(1,2)$ as a horocyclic product of the hyperbolic plane with a tree. \\
    \emph{Bottom:} The model space of a lattice in SOL as a horocyclic product of two hyperbolic planes. Note a part of this figure is taken from a figure in \cite{Farb1998}
    }
    \label{fig_horocylicProduct}
\end{figure}

  We let $T_{n+1}$  be the $n+1$ valent oriented tree with $n$ incoming edges and one outgoing edge. This orientation allows us to define a height by fixing a basepoint $v_0 \in T_{n+1}$ and letting $h: T_{n+1} \to \R$ be the unique map with $h(v_0)=0$ that maps vertices to integers, edges to intervals and preserves the orientation in that if $(v,w)$ is an edge oriented from $v$ to $w$ then $h(w)=h(v)+1$. 
  
 We use the log model of the hyperbolic plane  $\mathbb{H}^2$ so that if $(x,y)$ represent the usual upper half plane coordinates with metric $ds^2 = (dx^2 + dy^2)/y^2$ we use instead $(x,t)$ where $t=\ln{y}$ in which case the metric is given by $ds^2= e^{-2t}dx^2 +dt^2$. This allows us to define the \emph{height} of point $h: \mathbb{H}^2 \to \R$ as $h(x,t)=t$.

 With this notation we can create model spaces for our three families of groups as follows. 
 
 \begin{itemize}
 \item \textbf{lamplighter groups $L_n$:} The Cayley graph given by the above presentation, also called the Diestel-Leader graph, is the horocyclic product \[DL(n,n)=T_{n+1}\times_hT_{n+1}.\] In this graph, two vertices $(x_1,y_1),\ (x_2,y_2)\in DL(n,n)$ are joined by an edge whenever both  $x_1$, $x_2$ and $y_1$, $y_2$ are joined by an edge in their respective trees. 
  \item \textbf{lattices in SOL, $\Gamma_A$:} A model space can be constructed as a lattice inside the horocyclic product 
 \[\HH^{2}\times_h \HH^{2}.\]
 This space is isometric to $SOL= \R^2 \rtimes \R$ with the standard Riemannian metric 
 $$ds^2= e^{-2t}dx^2 + e^{2t}dy^2 +dt^2.$$
 \item \textbf{solvable Baumslag-Solitar groups, BS($1,n$):} The model space constructed by Farb and Mosher in \cite{Farb1998} is a horocylic product. \[X_n=\HH^{2}\times_h T_{n+1}.\]
The one caveat here is that we have to either rescale the edge lengths in the tree to be $\ln{n}$ or rescale the curvature of the hyperbolic plane. See \cite{Farb1998}. 
 \end{itemize}

\subsubsection{Distance in horocyclic products.} In the Diestel-Leader graph $DL(n,n)$, distance is given by 
$$d((x_1, x_2), (y_1, y_2))= d_{T_{n+1}}(x_1,y_1)+d_{T_{n+1}}(x_2, y_2) - |h(x_1)-h(y_1)|$$
where $d_{T_{n+1}}$ is distance in the tree $T_{n+1}$ and $h$ is the height function on the horocyclic product \cite{Bertacchi}. On more general horocyclic products a similar formula coarsely holds, see Theorem A in  \cite{Ferragut}. 
In the case of $SOL$ any two points at the same height $t$ can be connected by a path that lies at height $t$ and so we can compute induced distances along height level sets. These are given by $d_t((p,t),(q,t))=e^t|\Delta x| + e^{-t}|\Delta y|$ see for example \cite{Eskin2012}.

 \subsection{Vertical geodesics and boundaries}
 \begin{definition}
Given a space $X$ with a height function $h:X \to \R$ we can define a \emph{vertical geodesic}
as a geodesic in $X$ that maps isometrically onto $\R$ via the height map $h$.     
 \end{definition}

Let $\mathcal{L}$ denote the space of all {vertical geodesics} in the space $X$.
Given $\gamma,\gamma'\in \mathcal{L}$, there are two natural equivalence relations $\sim_u$ and $\sim_\ell$ on $\mathcal{L}$:
$$\gamma \sim_u \gamma' \text{ if and only if } \lim_{t\to \infty} d(\gamma(t), \gamma'(t)) < \infty$$
and
$$\gamma \sim_\ell \gamma' \text{ if and only if } \lim_{t \to -\infty} d(\gamma(t), \gamma'(t)) < \infty.$$
We then define the \emph{lower boundary} of $X$ to be
$$\partial_\ell X := \mathcal{L}/\sim_\ell$$ and the \emph{upper boundary} to be 
$$
\partial^u X := \mathcal{L}/\sim_u.$$

For example, on $\mathbb{H}^2$ with its standard height function we have $\partial_\ell \mathbb{H}^2 \simeq \R$ while $\partial^u \mathbb{H}^2$ is a single point $\{ \infty \}$. 
In general, for a Gromov hyperbolic space $X$ with height $h$ given by a Busemann function, you can identify $\partial_\ell X= \partial_\infty X \setminus \{\infty \}$ where $\partial_\infty X$ is the usual visual boundary of $X$ and $\partial^u X= \{\infty\}$.

In the case of $T_{n+1}$ we have a nice identification of $\partial_\ell T_{n+1}$ with power series $\sum_{i=L}^\infty a_i n^i$ where $L \in \Z$  and $a_i \in \Z_n$, that is, with elements of $\Q_n$. To see this identification, first for any vertex $v\in T_{n+1}$ label all $n$ edges coming to $v$ with labels $0,1, \ldots, n-1$. Then a vertical geodesic determines a bi-infinite sequence $\{a_i\}$. Since any vertical geodesic must eventually join the geodesic labeled with all $0$'s we can identify this sequence with $\sum_{i=L}^\infty a_i n^i$ for some $L \in \Z$.

\begin{lemma} If $X_1\times_h X_2$ is a horocyclic product of Gromov hyperbolic spaces $X_1, X_2$ with height functions given by Busemann functions then we have the following boundary identifications
$$\partial_\ell (X_1\times_h X_2) =\partial_\ell X_1
\textrm{ and }\partial^u (X_1\times_h X_2) =\partial_\ell X_2.$$    
\end{lemma}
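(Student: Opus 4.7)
The plan is to identify vertical geodesics in the horocyclic product explicitly in terms of vertical geodesics in each factor, and then reduce the equivalence relation $\sim_\ell$ on $X_1 \times_h X_2$ to the equivalence relation $\sim_\ell$ on $X_1$ alone, with a symmetric argument handling $\sim_u$.

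First I would unpack the definition of a vertical geodesic: any vertical geodesic $\gamma$ in $X_1\times_h X_2$ has the form $\gamma(t) = (\gamma_1(t), \gamma_2(t))$, where $\gamma_1$ is a vertical geodesic in $X_1$ parameterized so that $h_1(\gamma_1(t)) = t$, and $\gamma_2$ is a vertical geodesic in $X_2$ parameterized so that $h_2(\gamma_2(t)) = -t$. Conversely, any such pair determines a vertical geodesic in the horocyclic product. As $t\to -\infty$ the first coordinate $\gamma_1$ descends toward a point of $\partial_\ell X_1 = \partial_\infty X_1\setminus\{\infty\}$, while the second coordinate $\gamma_2$ ascends toward the common Busemann basepoint $\infty\in\partial_\infty X_2$.

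Next, using the coarse distance formula from Theorem A in \cite{Ferragut}, for two vertical geodesics $\gamma,\gamma'$ evaluated at the same time $t$ the points $\gamma(t)$ and $\gamma'(t)$ have equal heights, so the formula collapses (up to additive constants) to
\[
d(\gamma(t), \gamma'(t)) \asymp d_{X_1}(\gamma_1(t), \gamma'_1(t)) + d_{X_2}(\gamma_2(t), \gamma'_2(t)).
\]
The crucial observation is that as $t\to -\infty$ the second term stays bounded: both $\gamma_2(t)$ and $\gamma'_2(t)$ sit at height $-t$ on vertical geodesics in $X_2$ asymptotic to the common boundary point $\infty$, and two such rays in a Gromov hyperbolic space stay uniformly close at common heights (indeed, their distance decays exponentially). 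Hence $\gamma\sim_\ell\gamma'$ in $X_1\times_h X_2$ if and only if $\lim_{t\to -\infty} d_{X_1}(\gamma_1(t), \gamma'_1(t)) < \infty$, which is precisely $\gamma_1\sim_\ell \gamma'_1$ in $X_1$. Combined with the bijection from the previous step, this yields $\partial_\ell(X_1\times_h X_2) = \partial_\ell X_1$.

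The identification $\partial^u(X_1\times_h X_2) = \partial_\ell X_2$ follows by the symmetric argument with $t\to +\infty$ and the roles of $X_1$ and $X_2$ reversed. I expect the main obstacle to be verifying the exponential-closeness claim for vertical geodesics in a general Gromov hyperbolic space with Busemann height function; this is transparent for $\HH^2$ (a direct calculation in the log model) and for the regular trees $T_{n+1}$ (where two geodesics into the same end eventually coincide), but in full generality it requires invoking the thin-triangle and asymptotic-ray estimates characteristic of Gromov hyperbolicity.
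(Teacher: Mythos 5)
Your argument is correct and is essentially an expanded version of the paper's own (one-sentence) proof: the paper simply observes that the height function in the second factor is reversed, which is precisely the mechanism you spell out — as $t\to-\infty$ the second coordinates ascend toward the common Busemann point of $X_2$ and stay boundedly close, so $\sim_\ell$ reduces to the relation on the $X_1$-coordinates, and symmetrically for $\partial^u$. The details you flag (splitting vertical geodesics into vertical coordinates and the asymptotic-ray estimate in a Gromov hyperbolic space) are exactly the routine facts the paper leaves implicit.
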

\begin{proof}
 This follows easily from the fact that the height function in the second factor is reversed. 
\end{proof}

\begin{corollary} The lower and upper boundaries of $L_n, BS(1,n)$ and  $ \Gamma_A$ are as follows:
    
 \begin{itemize}
 \item \textbf{For lamplighter groups $L_n$:} 
$$\partial_\ell (DL(n,n)) =\partial_\ell T_{n+1}\simeq \Q_n, \ \partial^u (DL(n,n)) =\partial_\ell T_{n+1}\simeq \Q_n$$

 \item \textbf{For solvable Baumslag-Solitar groups $BS(1,n)$:} 
 $$\partial_\ell (X_n) =\partial_\ell \mathbb{H}^{2}\simeq \R, \ \partial^u (X_n) =\partial_\ell T_{n+1}\simeq \Q_n$$

 \item \textbf{For $\Gamma_A$ lattices in $SOL$:} 
  $$\partial_\ell (SOL) =\partial_\ell \mathbb{H}^{2}\simeq \R,\ \partial^u (SOL) =\partial_\ell \mathbb{H}^{2}\simeq \R$$
 
 \end{itemize}
 \end{corollary}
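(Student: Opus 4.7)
The plan is to apply the preceding lemma factor-by-factor to each of the three model spaces, substituting the explicit identifications of lower boundaries of $T_{n+1}$ and $\HH^2$ that were established earlier in the section.

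First, I would verify the hypotheses of the lemma in each case. The regular tree $T_{n+1}$ is a $0$-hyperbolic geodesic metric space and its height function $h:T_{n+1}\to\R$ coincides (up to sign) with the Busemann function associated to the unique geodesic ray traveling in the positive-orientation direction from the basepoint $v_0$. Similarly, in the log model of $\HH^2$, the coordinate $t$ is (up to sign) the Busemann function at the point at infinity $\{\infty\}$. Thus in all three cases the model space is a horocyclic product of Gromov hyperbolic spaces with height functions given by Busemann functions, and the lemma applies.

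Next, I would substitute the explicit identifications already recorded in the paper: $\partial_\ell T_{n+1}\simeq \Q_n$ (via the power-series description using edge labels $0,1,\ldots,n-1$) and $\partial_\ell \HH^2\simeq \R$ (since $\partial_\infty \HH^2\setminus\{\infty\}$ is the $x$-axis). Applying the lemma:
\begin{itemize}
\item For $L_n$ with $X_1=X_2=T_{n+1}$, the model space is $DL(n,n)$, and both $\partial_\ell(DL(n,n))=\partial_\ell T_{n+1}$ and $\partial^u(DL(n,n))=\partial_\ell T_{n+1}$ are identified with $\Q_n$.
\item For $BS(1,n)$ with $X_1=\HH^2$ and $X_2=T_{n+1}$, we obtain $\partial_\ell(X_n)=\partial_\ell\HH^2\simeq \R$ and $\partial^u(X_n)=\partial_\ell T_{n+1}\simeq \Q_n$.
\item For $\Gamma_A$ with $X_1=X_2=\HH^2$, we obtain $\partial_\ell(SOL)=\partial^u(SOL)=\partial_\ell\HH^2\simeq \R$.
\end{itemize}

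Since this corollary is essentially a table of consequences of the lemma, there is no real obstacle; the only subtlety worth flagging is the normalization convention in the $BS(1,n)$ model, where the tree edges (or equivalently the hyperbolic curvature) must be rescaled by $\ln n$ as noted following the definition of $X_n$. This rescaling does not affect the identification of the boundaries as sets (only the metric on them), so the conclusion $\partial^u(X_n)\simeq \Q_n$ remains valid.
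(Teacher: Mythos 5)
Your proposal is correct and matches the paper's (implicit) argument exactly: the corollary is stated as an immediate application of the preceding lemma on boundaries of horocyclic products, combined with the identifications $\partial_\ell T_{n+1}\simeq \Q_n$ and $\partial_\ell \HH^2\simeq \R$ recorded earlier in the section. Your additional remark about the $\ln n$ rescaling in the $BS(1,n)$ model affecting only the metric and not the underlying identification of the boundary is a reasonable clarification consistent with the paper's treatment.
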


We also need to endow these boundaries with a metric. When $X$ is $CAT(-1)$, such as in our case, there is a well studied family of metrics on $\partial_\ell X$: the \emph{parabolic visual boundary metrics}. 
In the case of $\mathbb{H}^2$ and $T_{n+1}$, the usual metrics on $\R$ and $\Q_n$, defined by the Euclidean and $n$-adic norms respectively, are examples of parabolic visual metrics so we will just use those. Geometrically we should think of these metrics as $d(x,x')=a^{t_0}$ where $t_0$ is the height at which the vertical geodesics defined by $x,x'$ come close and $a>0$ is a parameter which in the case of $T_{n+1}$ can be chosen to be $n$ and in the case of the curvature $-1$ hyperbolic plane $a=e$. The notion of coming close is also a little different in the two settings. In the $T_{n+1}$ case coming close will mean intersecting for the first time while in the hyperbolic plane we mean coming within some fixed bounded distance of each other. This last notion also works for general $CAT(-1)$ spaces so anytime we use these boundary metrics we assume that the $X_i$ are $CAT(-1)$ spaces.

\subsection{$(\epsilon,M)$-quadrilaterals}\label{subsec:eMquads} 

Our next step is to understand how vertical geodesics in our horocyclic product come together.
We will be able to do this in terms of the boundary metrics which we denote 
$d_\ell$ and $d_u$ for the lower and upper boundaries respectively.

\begin{definition}
    Given vertical geodesics $p,q$ in a horocyclic product $X_1\times_h X_2$, we define $$\delta(p,q):=d_\ell(p,q)d_u(p,q).$$ 
\end{definition}

In the lamplighter case $\log_n(\delta(p,q))$ has a very nice geometric interpretation, specifically it measures the distance between the vertical geodesics $p$ and $q$ inside the Diestel-Leader graph (see Figure \ref{fig_vertgeods} for an explanation).

\begin{figure}\label{fig_vertgeods}
\begin{center}
\includegraphics[width=1.8in]{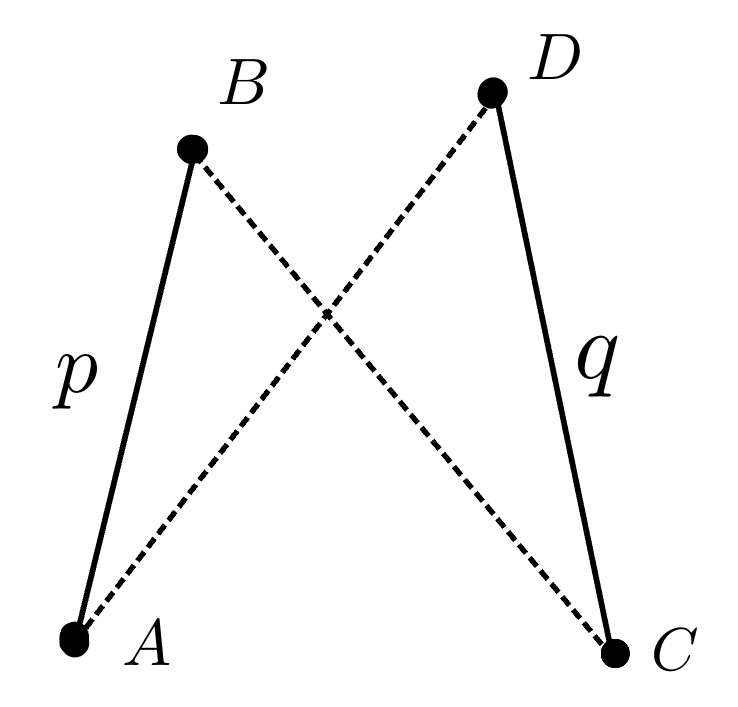}
\caption{The distance between geodesics $p$ and $q$ is given by the length of the segment from $B$ to $C$ (or equivalently $A$ to $D$). If $h$ denotes the height function on $DL(n,n)$. Then $d_{\ell}(p,q)=n^{h(B)}$ and $d_u(p, q)=n^{- h(C)}$ so that $\delta(p,q)=n^{h(B)-h(C)}$ and therefore $\log_n(\delta(p,q))$ is exactly the distance between the geodesics $p$ and $q$. 
}
\label{fig_vertgeods}
\end{center}
\end{figure}

For more general horocyclic products a coarse version of this interpretation can be deduced 
using Theorem B from \cite{Ferragut}. In particular the picture in Figure \ref{fig_vertgeods}  needs to be modified slightly so that the points $A,B,C,D$ are now replaced by balls of a fixed radius. 
Theorem B ensures that the shortest path between the vertical geodesics $p$ and $q$ stays bounded distance from vertical geodesics that either start in $A$ and end in $D$ or start in $B$ and end in $C$. 

The height interval at which this geodesic between $p$ and $q$ lies is also important. It coincides coarsely with the interval defined by $\log{d_\ell(p,q)}$ and $-\log{d_u(p,q)}$ where the logarithm base is determined by the base chosen for the boundary metrics. 
\begin{definition}\label{defn:coarseheight} Suppose $X=X_1 \times_h X_2$ is a horocyclic product of two $CAT(-1)$ spaces $X_i$ and $d_\ell, d_u$ two parabolic visual metrics. For vertical geodesics $p,q$ suppose $d_\ell(p,q)=a^{t_\ell}$ and $d_u(p,q)=a^{-t_u}$ then we call any $t_0\in [t_u,t_\ell]$ 
a coarse height at which $p$ and $q$ come close. 
\end{definition}
Since we will only use this notion coarsely and when $\delta(p,q)$ is small it will not matter which coarse height in the interval we chose. In particular it suffices to look at one endpoint or the other.

In our three families of groups, $\delta$ also has nice algebraic interpretations:

\begin{itemize} 
 \item {\textbf{lamplighter groups $L_n$:}} For $p=(x_i)$,\ $q=(y_i) \in \InfSum$, define $d_\ell(p,q)=n^{-l_+}$ where $l_+$ is the maximal integer such that $x_i=y_i$ for all $i<l_+$. In other words, $l_+$ is the smallest integer such that $x_i\neq y_i$. Similarily $d_u(p,q)=n^{l_-}$ where $l_-$ is the minimal integer such that $x_i=y_i$ for all $i> l_-$. Outside this interval of length $l_--l_+$, $x_i= y_i$, i.e. this interval is the support of $p-q$ and so we denote it by $supp(p-q)$ and its length by $|supp(p-q)|$. Then   $\delta(p, q):=n^{|supp(p-q)|}$. 

\item {\textbf{lattices in SOL:}} If $p=(x_1, x_2),\ q=(y_1, y_2) \in B \subset \R^2$ then
we have $\delta(p, q):=|x_1-y_1|\cdot |x_2-y_2|=|\Delta x||\Delta y|$. Because of how $B$ embeds in $\R^2$ we can have $|\Delta x|, |\Delta y|$ arbitrarily small but never zero unless $p=q$.

\item {\textbf{solvable Baumslag-Solitar groups:}} Here we have 
$\delta(p,q):=|p-q|_{\mathbb{R}}|p-q|_{\mathbb{Q}_{n}}.$
Note that if $p,q \in \mathbb{Z}[1/n]$ and if we write $p-q=rn^k$ where $r$ contains no powers of $n$ then $\delta(p,q)= |r|n^k \cdot n^{-k} = |r|$.

\end{itemize}

\begin{definition}
    An \emph{$(\epsilon,M)$-quadrilateral} is a collection of four distinct points $p_i \in B$ for $i=1,2,3,4$ with $\delta(p_i,p_{i+1})\leq \epsilon$ and $\delta(p_i,p_{i+2})\geq M$ where the indices are taken mod $4$. We represent these points in a matrix
    \begin{equation*}
P = 
\begin{bmatrix}
p_{1} & p_{2} \\
p_{4} & p_{3}
\end{bmatrix}.
\end{equation*}
In the case that $p_1+p_3=p_2+p_4$ we call $P$ an $(\epsilon, M)$-parallelogram instead.    
\end{definition}

\begin{figure}\label{fig_quad}
    \begin{center}
           \includegraphics[scale=0.16]{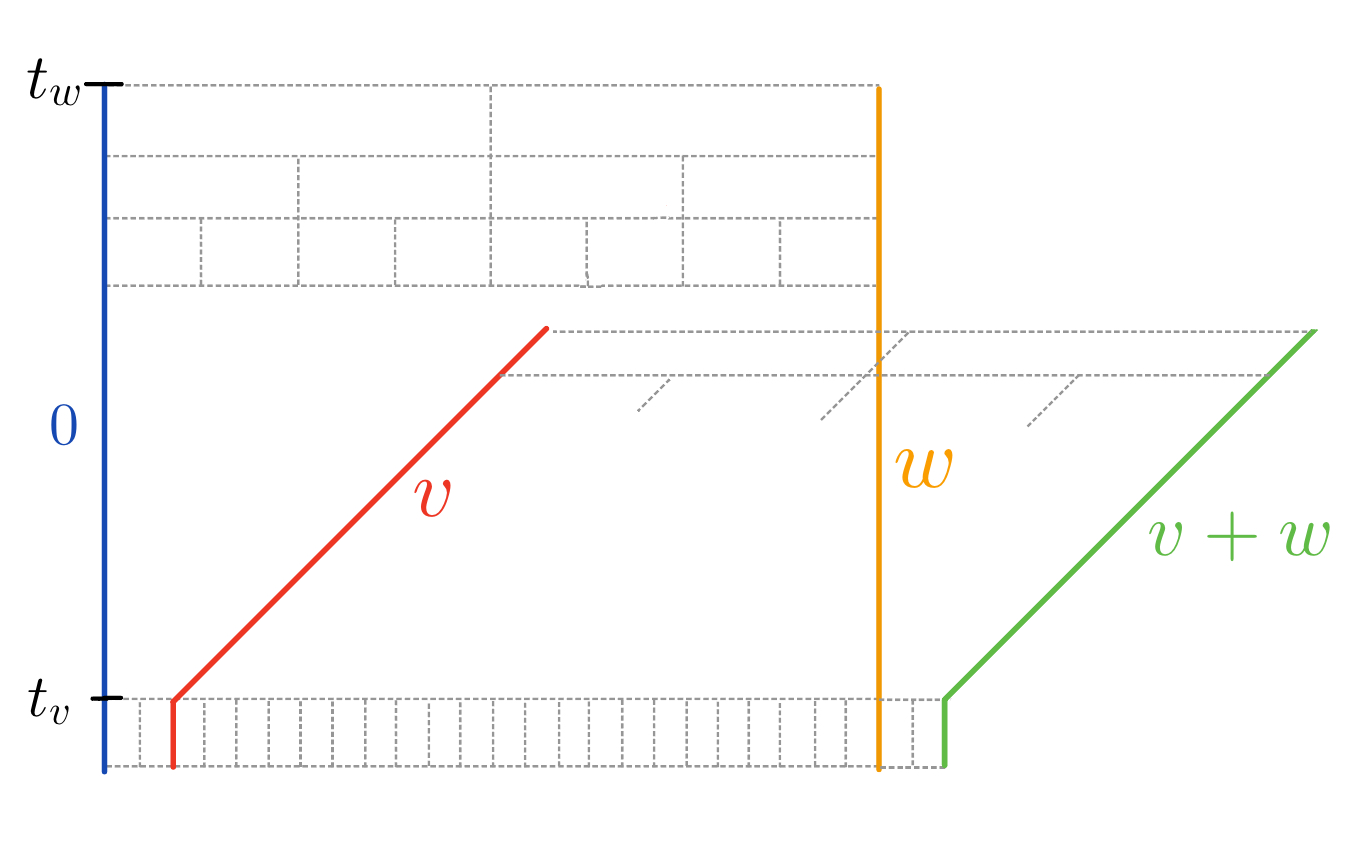}
    \caption{A schematic of an $(\epsilon,M)$ quadrilateral in the model space for $BS(1,2)$, given by vertical geodesics $0,v,w,$ and $v+w$. Every dashed grey segment has length 1. The coarse height $t_v$ is the height where the geodesics 0 and $v$ are closest together. At $t_v$, we can see that the distance between $0$ and $v$ is small and the distance between $w$ and $v+w$ is small but the distance between $0$ and $v+w$ is large. The distance between $0$ and $w$ is realized at height $t_w$, which is also a small distance. 
    }
    \label{fig_quad}
    \end{center}
\end{figure}

We will always choose $M \gg \epsilon$. In this case we have a nice geometric interpretation: an $(\epsilon,M)$-quadrilateral is one for which the vertical geodesics represented by $p_i$ come coarsely close cyclically but are far across the diagonal. The only way this is possible is that they are arranged as in Figure \ref{fig_quad}, 
that is the height at which the pairs $p_1,p_2$ and $p_3,p_4$ come close is very far from the height at which $p_2, p_3$ and $p_4, p_1$ come together.  
Note that rotating the entries of an $(\epsilon, M)$-parallelogram results in another $(\epsilon, M)$ parallelogram.

\begin{lemma}\label{lemma:quadstruct}
    If $X= X_1\times_h X_2$ is a horocyclic product of $CAT(-1)$ spaces and $P$ an $(\epsilon, M)$ quadrilateral for $M \gg \epsilon$ with entries $p_i$ for $i=1,2,3,4$ then if $t_i$ is a coarse height along which $p_i$ and $p_{i+1}$ come close then up to relabeling $t_1\gg t_2$, $t_3 \gg t_4$ while $t_2\ll t_3$, $t_4 \ll t_1 $ and $t_i,t_{i+2}$ are close. In other words the geodesics form a picture very similar to Figure \ref{fig_quad}. 
\end{lemma}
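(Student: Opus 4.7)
The plan is to encode each of the four adjacency relations as a sign that records the relative ``height'' of consecutive pairs, and then to show that only the alternating sign pattern is realizable.

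First I set $\alpha_i=d_\ell(p_i,p_{i+1})$ and $\beta_i=d_u(p_i,p_{i+1})$, so the adjacency condition becomes $\alpha_i\beta_i\le\epsilon$. Applying the metric triangle inequality on the two parabolic visual boundaries along both triangular paths from $p_i$ to $p_{i+2}$ gives the pair of upper bounds $d_\ell(p_i,p_{i+2})\le\min(\alpha_i+\alpha_{i+1},\alpha_{i+2}+\alpha_{i+3})$ and similarly for $d_u$. Multiplying and comparing with $\delta(p_i,p_{i+2})\ge M$ yields, for every $i\bmod 4$, the inequality $\alpha_i\beta_{i+1}+\alpha_{i+1}\beta_i\ge M-2\epsilon$. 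Since $(\alpha_i\beta_{i+1})(\alpha_{i+1}\beta_i)=(\alpha_i\beta_i)(\alpha_{i+1}\beta_{i+1})\le\epsilon^2$ while the sum is essentially $M$, exactly one summand is $\gtrsim M/2$ and the other is $\lesssim 2\epsilon^2/M$. I record the dominant summand as a sign $f_i\in\{+,-\}$. Dividing the dominant inequality by the two adjacency bounds forces the ratios $\alpha_i/\alpha_{i+1}$ and $\beta_{i+1}/\beta_i$ (or their reciprocals, depending on $f_i$) to exceed $M/(2\epsilon)$, and translating into coarse heights this gives $|t_i-t_{i+1}|\gtrsim D:=\log_a(M/\epsilon)$, a large gap because $M\gg\epsilon$, with the sign of $t_i-t_{i+1}$ equal to $f_i$.

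The crux is to show that $(f_1,f_2,f_3,f_4)$ must alternate. The constant patterns are ruled out by cyclic telescoping. In any other non-alternating pattern two consecutive signs agree; WLOG $f_i=f_{i+1}=+$, producing a chain $\alpha_i\gg\alpha_{i+1}\gg\alpha_{i+2}$. Triangle together with reverse triangle for $d_\ell(p_i,p_{i+2})$ via $p_{i+1}$ then pin this quantity to $\approx\alpha_i$. I then branch on the four possibilities for $(f_{i+2},f_{i-1})$: in one subcase the chain inequalities already combine to a cyclic inconsistency among the $\alpha$'s, while in each remaining subcase the alternate-path bound via $p_{i+3}$ --- either the upper bound $d_\ell(p_i,p_{i+2})\le\alpha_{i+2}+\alpha_{i+3}$ forcing $d_\ell\ll\alpha_i$, or the reverse lower bound $d_\ell(p_i,p_{i+2})\ge\alpha_{i+3}-\alpha_{i+2}$ forcing $d_\ell\gg\alpha_i$ --- contradicts the already-established estimate $d_\ell\approx\alpha_i$. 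Two consecutive $-$'s are handled symmetrically using $\beta$'s. Therefore the sign pattern is $(+-+-)$ or $(-+-+)$, and after a cyclic relabeling we may assume the former; this immediately yields $t_1\gg t_2,t_4$ and $t_3\gg t_2,t_4$.

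Finally, to extract $t_1\approx t_3$ and $t_2\approx t_4$: with the alternating pattern, triangle plus reverse triangle on $d_\ell(p_1,p_3)$ via $p_2$ gives $d_\ell(p_1,p_3)\approx\alpha_1$, and via $p_4$ gives $d_\ell(p_1,p_3)\approx\alpha_3$, so $\alpha_1\approx\alpha_3$ and hence $t_1\approx t_3$. The symmetric computation for $d_u(p_2,p_4)$ via $p_3$ and $p_1$ yields $\beta_2\approx\beta_4$, so $t_2\approx t_4$, completing the picture of Figure \ref{fig_quad}. The main obstacle I expect is the four-subcase bookkeeping that rules out non-alternating patterns, since in each subcase one must carefully identify which particular chain inequality or which triangle-inequality estimate delivers the contradiction.
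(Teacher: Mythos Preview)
Your proof is correct and, at its core, follows the same route as the paper: both arguments use the triangle and reverse triangle inequalities on the parabolic visual metrics $d_\ell,d_u$ to rule out any non-alternating configuration of the coarse heights $t_i$. The paper's proof is only a sketch: it argues $|t_i-t_{i+1}|$ is large via a geometric path-length bound (there is a path from $p_i$ to $p_{i+2}$ of length $\lesssim |t_i-t_{i+1}|+4\log\epsilon$), then assumes $t_1\gg t_2$ and $t_2\gg t_3$ and derives a contradiction by estimating $\delta(p_1,p_4)$ directly with one application of the triangle inequality in each boundary. Your version replaces the geometric path bound by the purely algebraic product--sum observation $(\alpha_i\beta_{i+1})(\alpha_{i+1}\beta_i)\le\epsilon^2$ while $\alpha_i\beta_{i+1}+\alpha_{i+1}\beta_i\ge M-2\epsilon$, and then carries out a complete sign-pattern case analysis rather than a single representative case. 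You also explicitly prove the conclusion $t_1\approx t_3$, $t_2\approx t_4$, which the paper states but does not argue. So the two proofs are the same in spirit; yours is more self-contained and fills in the steps the paper leaves to the reader, at the cost of the four-subcase bookkeeping you flagged.
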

\begin{proof}
    Similar lemmas have appeared in other places such as \cite{Eskin2012} but we give a sketch here because our definition is slightly different. Since we are working coarsely, we write $\gtrsim$ instead of $\geq$ to mean there is some bounded error either additive or multiplicative. 
    First we note that $t_i$ and $t_{i+1}$ must lie at sufficiently different heights if $\delta(p_i, p_{i+2})\geq M$. This is because there is always a path of length at most $|t_i-t_{i+1}| + 4\log\epsilon $ joining $p_i$ and $p_{i+2}$. Now without loss of generality suppose $t_1\gg t_2$. For contradiction suppose $t_2\gg t_3$. Note by the triangle inequality, we have 
    $$d_\ell(p_1,p_4) \geq  d_\ell(p_1, p_2) - d_\ell(p_2, p_3) - d_\ell(p_3, p_4) \gtrsim a^{t_1}-a^{t_2}-a^{t_3} \gtrsim a^{t_1}$$
    while $$ d_u(p_1,p_4) \gtrsim a^{-t_3}-a^{-t_2}-a^{-t_1}\gtrsim a^{-t_3}.$$
    But then $\delta(p_1,p_4)\gtrsim a^{t_1-t_3}\gg 0$ which contradicts that $\delta(p_1, p_4)\leq \epsilon$.
 \end{proof}

Next we verify this geometric picture for $(\epsilon, M)$-parallelograms in our three cases.
\begin{proposition} Let $G= B \rtimes \R$ be either a lamplighter group $L_n$, a solvable Baumslag-Solitar group $BS(1,n)$ or  $\Gamma_A$ a lattice in $SOL$. 
Let \begin{equation*}
P = 
\begin{bmatrix}
0 & w \\
v & w+v
\end{bmatrix}.
\end{equation*}
be an $(\epsilon, M)$ parallelogram in $B$ with $\epsilon\ll M$. Let $t_w$ be the coarse height at which the vertical geodesics defined by $0$ and $w$ come close and  similarly let $t_v$ be the coarse height at which the vertical geodesics defined by $0$ and $v$ come close.  Then for $M \gg \epsilon$ we have that $||t_w|-|t_v||\gg 0$ and hence geometrically our parallelogram has the shape denoted in Figure \ref{fig_quad}.
\end{proposition}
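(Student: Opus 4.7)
The plan is to derive the statement directly from Lemma \ref{lemma:quadstruct} applied to $P$ and then make the bound explicit in each of the three cases using the algebraic formula for $\delta$. Setting $p_1 = 0$, $p_2 = w$, $p_3 = w + v$, $p_4 = v$, the parallelogram $P$ is an $(\epsilon, M)$-quadrilateral, so Lemma \ref{lemma:quadstruct} furnishes coarse heights $t_1, t_2, t_3, t_4$ at which consecutive geodesics come close, with (after a possible cyclic relabeling) $t_1 \approx t_3 \gg t_2 \approx t_4$ and a gap bounded below by something of order $\log(M/\epsilon)$. Since $t_w$ is by definition a coarse height at which $p_1 = 0$ and $p_2 = w$ come close, we may take $t_w = t_1$, and similarly $t_v = t_4$. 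The lemma then reads $t_w \gg t_v$, which is the qualitative shape of Figure \ref{fig_quad}.

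To pass from this separation to the stated $||t_w|-|t_v|| \gg 0$, I would use that the vertical $\Z$-subgroup of each of our three groups acts by isometries translating the height. Left-translating so that $p_1$ sits at the ``waist height'' of the parallelogram arranges that the two near-events occur at heights $t_w > 0 > t_v$, and Lemma \ref{lemma:quadstruct} then forces both $|t_w|$ and $|t_v|$ large, comparable to $\log(M/\epsilon)$. The inequality $||t_w|-|t_v||\gg 0$ is then a mild asymmetry statement: equality $|t_w|=|t_v|$ would require an exact algebraic symmetry between $w$ and $v$ that is not forced by the $(\epsilon,M)$-parallelogram hypothesis, and a case check in each model rules it out once $M/\epsilon$ is large enough.

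The quantitative verification in each case proceeds from the formula for $\delta$. In $L_n$, the estimates $|supp(w)|, |supp(v)| \leq \log_n \epsilon$ and $|supp(w+v)| \geq \log_n M$ force $supp(w)$ and $supp(v)$ to be disjoint intervals separated by a gap of order $\log_n(M/\epsilon^2)$; since $t_w \in -supp(w)$ and $t_v \in -supp(v)$, this produces the required separation. In SOL, parameterizing $w = (e^{-t_w}\sqrt{\delta(0,w)},\, e^{t_w}\sqrt{\delta(0,w)})$ and similarly for $v$ yields $\delta(0, w+v) \leq 2\cosh(t_w - t_v)\sqrt{\delta(0,w)\delta(0,v)} + O(\epsilon)$, and the hypothesis $\delta(0, w+v) \geq M$ then extracts $|t_w - t_v| \gtrsim \log(M/\epsilon)$. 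In $BS(1,n)$, the mixed tree-hyperbolic geometry gives an analogous parameterization leading to the same conclusion.

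The main obstacle is quantitative bookkeeping: a ``coarse height'' is only determined up to an interval of length $\log \delta \leq \log \epsilon$, so the separation one produces must dominate not just $\log \epsilon$ but also the additive and multiplicative constants hidden in Lemma \ref{lemma:quadstruct}. The hypothesis $M \gg \epsilon$ is precisely the right strength to make this work, and the resulting lower bound on the separation is essentially linear in $\log(M/\epsilon)$.
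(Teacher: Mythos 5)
The core of your argument is sound and, in its third paragraph, essentially the paper's own: the paper does not route through Lemma \ref{lemma:quadstruct} (the proposition is meant as a direct verification of that picture in the three algebraic models), but your case-by-case estimates from the formula for $\delta$ are exactly that verification. The lamplighter disjoint-support argument matches the paper's; your SOL estimate $\delta(0,w+v)\le \delta(0,w)+\delta(0,v)+2\cosh(t_w-t_v)\sqrt{\delta(0,w)\delta(0,v)}$ is a correct and in fact more quantitative version of the paper's computation; and the $BS(1,n)$ case, which you merely assert is ``analogous,'' does go through but should be written out: with $w=r_wn^{k_w}$, $v=r_vn^{k_v}$, $|r_w|,|r_v|\le\epsilon$ and (say) $k_w\le k_v$, the ultrametric inequality gives $\delta(0,w+v)\le |r_w|+|r_v|n^{k_v-k_w}\le \epsilon(1+n^{k_v-k_w})$, so $\delta(0,w+v)\ge M$ forces $k_v-k_w\gtrsim\log_n(M/\epsilon)$. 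All of this establishes that the two heights are far apart, i.e.\ $|t_w-t_v|\gg 0$, which is precisely the geometric content of Figure \ref{fig_quad} and is all that the paper's own proof establishes (``come close at very different heights,'' $k_v\gg k_w$, etc.).

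The genuine flaw is your second paragraph. The quantity $||t_w|-|t_v||$ is not translation-invariant, and read literally the inequality $||t_w|-|t_v||\gg 0$ is simply false: in $L_2$ take $w$ to be a single lamp at index $N$ and $v$ a single lamp at index $-N$; this is a $(2,2^{2N+1})$-parallelogram with $t_w\approx N$ and $t_v\approx -N$, so $||t_w|-|t_v||=0$ even though $|t_w-t_v|=2N$ is huge. Hence no ``case check in each model'' can rule out the symmetric situation, and the asymmetry you invoke is genuinely not forced by the $(\epsilon,M)$ hypotheses. Your normalization strategy is moreover self-defeating: translating so that the basepoint sits at the waist of the parallelogram tends to place the two near-heights symmetrically about $0$, which makes $||t_w|-|t_v||$ small rather than large. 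The statement should be read as asserting the height separation $|t_w-t_v|\gg 0$ (that is what the paper's proof proves and what the figure requires); your first and third paragraphs already deliver exactly that, and the second paragraph should be deleted rather than repaired.
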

\begin{proof}

As defined above $t_w$ can be any point in the interval defined by $\log{d_\ell(0,w)}$ and $-\log{d_u(0,w)}$ and similarly for $t_v$.
Since we are only interested in the coarse picture the precise values of $t_w$ and $t_v$ are unnecessary. 
Again in the lamplighter group the picture is clearest.

\begin{itemize}
\item {\textbf{lamplighter groups $L_n:$}} Consider $w=(x_i)$,$v=(y_i)$ in $\InfSum$ with $supp(w)=[i_w,j_w]$ and $supp(v)=[i_v, j_v]$. These intervals are the heights at which the two geodesics come close to the geodesic defined by $0$.
The condition that $\delta(0,w), \delta(0,v)$ are small here means that both $|i_w-j_w|$ and $|i_v-j_v|$ are small,  while $\delta(w, v)$ large means that the size of the support of $w-v$ is large. 
More specifically for $\epsilon \ll M$, we can assume that up to exchanging $w$ and $v$ we have that $supp(v-w)=[i_w, j_v]$. This means that $|i_w-j_v|$ is large so that the vertical geodesics defined by $w$ and $v$ come close at very different heights. By translation this means that $w, w+v$ and $v, w+v$ also come together at very different heights and form the geometric picture from \Cref{fig_quad}.

\item {\textbf{solvable Baumslag-Solitar groups: }} Let $w, v\in \Z[\frac{1}{n}]$ where $w=r_wn^{k_w}$ and $v=r_v n^{k_v}$. Then assume that $\delta(0,w)=|r_w|$ and $\delta(0,v)=|r_v|$ are small. By relabeling we may assume $k_v \geq k_w$. Then $w-v= (r_w- r_vn^{k_v-k_w})n^{k_w}$ and $\delta(w,v)=(r_w- r_vn^{k_v-k_w})$. If $\delta(w,v)$ is large then $k_v\gg k_w$ since $r_w$ and $r_v$ are small.  Now $k_w,k_v$ are heights at which the vertical geodesics defined by $w,v$ come close to the geodesic defined by $0$. In fact they are exactly the heights at which these geodesics branch in the tree direction from the vertical geodesic defined by zero. Again we have a quadrilateral like Figure \ref{fig_quad}.

\item {\textbf{lattices in SOL: }} 
If $w=(x_w, y_w), v=(x_v, y_v)\in B \subset \R^2$ then $\delta(0,w)=|x_w||y_w|$ and $\delta(0,v)=|x_v||y_v|$
while $\delta(w, v)= |x_w - x_v||y_w - y_v|$. Without loss of generality we can assume that $|x_w - x_v|$ is large. Then the difference between $t_w=\log{|x_w|}$ and $t_v=\log{|x_v|}$ is large so the two geodesics defined by $w$ and $v$ come close at very different heights.

\end{itemize}

\end{proof}

 \subsection{Quasi-isometries of horocyclic products}

As mentioned in the introduction quasi-isometries of our three families of groups $B \rtimes \Z$ are well understood. Up to composition with an isometry and up to bounded distance they fix level sets of $\Z$.  
This was first proved in the $BS(1,n)$ case by \cite{Farb1998} and in the other two cases by Eskin-Fisher-Whyte \cite{Eskin2012,Eskin2013}. They also show that such maps must also coarsely preserve vertical geodesics and  induce biLipschitz maps of the lower and upper boundaries. 
The biLipschitz constants of these boundary maps only depend on the quasi-isometry constants in the following sense:
There exists $N, K$ such that 
$$N/K d_\ell ( p,q) \leq d_\ell(\psi(p),\psi(q)) \leq N K d_\ell(p,q)$$
and  $$1/(NK) d_u (p,q) \leq d_u(\psi(p),\psi(q)) \leq  K/N d_u(p,q)$$
where $K$ only depends on the quasi-isometry constants. By choosing an appropriate isometry to compose with we can also assume that $N=1$. In addition any two biLipschitz maps of the two boundaries define a quasi-isometry of our group.  This allows us to identify the quasi-isometry groups of our three groups as
\begin{itemize}
\item $QI(\Gamma_A)= (Bilip(\R) \times Bilip(\R)\rtimes) \Z_2 $
\item $QI(BS(1,n))= Bilip(\R) \times Bilip(\Q_n) $
\item $QI(DL(n,n))= (Bilip(\Q_n) \times Bilip(\Q_n))\rtimes \Z_2 $
\end{itemize}
where $Bilip(\R)$ and $Bilip(\Q_n)$ denote the groups of biLipschitz self maps of $\R$ and $\Q_n$ respectively. 

The same is true for many other examples of horocyclic products of hyperbolic spaces with Busemann height functions $X_1 \times_h X_2$. Any time we can show that height level sets are  coarsely preserved we can conclude that  
$$QI(X_1\times_h X_2) \simeq Bilip(\partial_\ell X_1) \times Bilip(\partial_\ell X_2)$$
with an additional $\Z_2$ factor in case $X_1=X_2$. For the latest work on this see \cite{Ferragut}.

The following lemmas are immediate from the definition of $\delta$ and $(\epsilon,M)$-parallelograms.
\begin{lemma}
    If $\Psi$ is a coarsely height preserving quasi-isometry of $X_1 \times_h X_2$ inducing a biLipschitz map $\psi$ on the boundaries, and $p,q$ are vertical geodesics then $$ 1/K^2\delta(p,q) \leq \delta(\psi(p), \psi(q)) \leq K^2 \delta(p,q),$$ where $K$ is the uniform biLipschitz boundary constant and depends only on the quasi-isometry constants of $\Psi$. 
\end{lemma}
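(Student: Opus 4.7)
The plan is to derive this lemma directly from the definition of $\delta$ together with the biLipschitz control on the boundary maps that was established in the discussion immediately preceding the statement. Since $\delta(p,q) = d_\ell(p,q)\cdot d_u(p,q)$ is literally a product of a lower and an upper boundary distance, and since $\Psi$ (after composing with an appropriate boundary isometry so that we may take $N=1$) induces maps $\psi$ that are biLipschitz with constant $K$ on each of the two boundaries, the conclusion should follow just by multiplying the two biLipschitz inequalities.

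More concretely, I would first recall the two boundary inequalities stated just above the lemma, namely
\[
\tfrac{1}{K}\, d_\ell(p,q) \;\leq\; d_\ell(\psi(p),\psi(q)) \;\leq\; K\, d_\ell(p,q),
\qquad
\tfrac{1}{K}\, d_u(p,q) \;\leq\; d_u(\psi(p),\psi(q)) \;\leq\; K\, d_u(p,q),
\]
with $K$ depending only on the quasi-isometry constants of $\Psi$. Here I am using the normalization $N=1$ which the preceding paragraph allows after absorbing an isometry. Multiplying the upper bounds on the two lines, and separately multiplying the lower bounds, immediately yields
\[
\tfrac{1}{K^2}\,\delta(p,q) \;\leq\; \delta(\psi(p),\psi(q)) \;\leq\; K^2\,\delta(p,q),
\]
which is the statement of the lemma.

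There is not really a serious obstacle here; the content of the lemma is essentially the observation that the product quantity $\delta$ inherits a biLipschitz-type estimate from its two factors with the constants squared. The one thing worth being careful about is the normalization step: the biLipschitz constants on the two boundaries stated before the lemma involved an extra factor $N$, and one must invoke the earlier remark that $N$ can be absorbed by composing $\Psi$ with an appropriate isometry of the ambient space so that only the single constant $K$ (which depends only on the quasi-isometry constants) remains. With that in place, the proof reduces to a one-line multiplication.
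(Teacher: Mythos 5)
Your proof is correct and is exactly the argument the paper has in mind --- the lemma is stated there as ``immediate from the definition of $\delta$,'' and multiplying the two boundary biLipschitz inequalities is the intended one-line verification. One small simplification: the normalization $N=1$ is not actually needed, since with the constants as stated ($N/K$ and $NK$ on the lower boundary, $1/(NK)$ and $K/N$ on the upper) the factors of $N$ cancel in the product $d_\ell\cdot d_u$, so the $K^2$ bounds hold without composing with any isometry.
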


\begin{lemma}\label{lemma:quadqi}
If $P$ is an $(\epsilon, M)$-quadrilateral in $X_1\times_hX_2$ and $\Psi$ is a coarsely height preserving quasi-isometry inducing a biLipschitz map $\psi$ on the boundary, then $\psi(P)$ is an $(\epsilon', M')$-quadrilateral where $\epsilon' \leq K^2\epsilon$ and $M'\geq M/K^2$ where $K$ depends only on the quasi-isometry constants of $\Psi$.
 \end{lemma}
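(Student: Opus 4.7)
The plan is to obtain the result as a direct book-keeping consequence of the preceding lemma, which already supplies the bilipschitz distortion bound $\tfrac{1}{K^2}\delta(p,q) \leq \delta(\psi(p),\psi(q)) \leq K^2\delta(p,q)$ for vertical geodesics $p,q$. Given an $(\epsilon,M)$-quadrilateral $P$ with entries $p_1,p_2,p_3,p_4$, I would simply apply this distortion estimate once to each of the four adjacent pairs $(p_i,p_{i+1})$ and once to each of the two diagonal pairs $(p_i,p_{i+2})$, with indices read mod $4$. The upper half of the distortion bound turns $\delta(p_i,p_{i+1})\leq \epsilon$ into $\delta(\psi(p_i),\psi(p_{i+1}))\leq K^2\epsilon$, giving the claimed $\epsilon'$. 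Symmetrically, the lower half of the distortion bound turns $\delta(p_i,p_{i+2})\geq M$ into $\delta(\psi(p_i),\psi(p_{i+2}))\geq M/K^2$, giving the claimed $M'$.

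The only loose end to tie up is the distinctness of the four image points $\psi(p_i)$, which the definition of an $(\epsilon',M')$-quadrilateral requires. I would note that since $M/K^2>0$, any two diagonal images satisfy $\delta(\psi(p_i),\psi(p_{i+2}))>0$ and hence are distinct; and adjacent images are forced apart from the diagonal structure by the same geometric picture described in Lemma \ref{lemma:quadstruct} (equivalently, one can invoke that the boundary map $\psi$ is biLipschitz on each factor and therefore injective, so distinct inputs produce distinct outputs). It is worth recording explicitly that the constant $K$ appearing here is the uniform biLipschitz constant of the boundary map supplied by the preceding classification of quasi-isometries, and that by the discussion just before that lemma $K$ depends only on the quasi-isometry constants of $\Psi$, not on the particular quadrilateral $P$.

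There is no real obstacle: the whole content of the statement is that the estimates from the previous lemma can be applied uniformly to the six distance comparisons packaged into the definition of an $(\epsilon,M)$-quadrilateral, and the exponents $K^2$ on either side propagate to give exactly $\epsilon' \leq K^2\epsilon$ and $M'\geq M/K^2$. If I wanted to be slightly more careful, I would separate the coarsely height preserving hypothesis (used to guarantee that $\Psi$ does induce a boundary map with well-defined $\delta$-distortion in the first place) from the numerical estimate, but no further geometric argument beyond the preceding lemma is required.
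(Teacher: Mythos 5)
Your proposal is correct and matches the paper's approach: the paper states this lemma (together with the preceding distortion bound $\tfrac{1}{K^2}\delta(p,q)\leq\delta(\psi(p),\psi(q))\leq K^2\delta(p,q)$) as immediate from the definition of $\delta$ and $(\epsilon,M)$-quadrilaterals, which is exactly the book-keeping application of that bound to the four adjacent and two diagonal pairs that you carry out. Your extra remark on distinctness of the image points via injectivity of the biLipschitz boundary map is a harmless elaboration, not a departure.
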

\section{Comparative Study of Results in Baumlaug Solitar groups and SOL} \label{Section: BSandSOL}

In this section, we review the rigidity results of pattern-preserving quasi-isometries for lattices in SOL and solvable Baumslag-Solitar groups.
We present the proofs of these rigidity results in \cite{Taback2000, Schwartz1996} from the geometric perspective in terms of the horocyclic product as a comparative study to the lamplighter group.

\subsection{Pattern rigidity for lattices in SOL}
Recall that a matrix $A\in SL(2, \Z)$ which is diagonalizable with no eigenvalues on the unit circle defines a lattice  $\Gamma_{A}=\mathbb{Z}^{2}\rtimes \Z$ in SOL$=\R^2\rtimes \R$. We equip $SOL$ with the standard Riemannian metric $ds^2= e^{-2t}dx^2 + e^{2t}dy^2 +dt^2$.  Recall that the lattice embeds in $SOL$ using the diagonalizing matrix and hence identifies the two eigenspaces of $A$ with the two coordinate spaces $x$ and $y$.
The cosets of the vertical subgroup in $\Gamma_A$ are identified by their $\R^2$ coordinates.

In this section, we recall Schwartz's proof \cite{Schwartz1996} in the horocyclic product model of lattices in SOL and prove that if a quasi-isometry preserves cosets of the vertical subgroup $\langle (0,0,1) \rangle < \Gamma_Z$ then the map is affine. Careful readers may notice that the notation here is different from those in \cite{Schwartz1996}. We focus more on the geometry of the horocyclic product and use $(\epsilon,M)$-quadrilateral to do the argument, which is coarsely the same as the quadrilaterals used in Schwartz's proof.

\begin{proposition}\label{prop:schwartz}\cite{Schwartz1996}
    Let $\Psi$
    be a quasi-isometry that coarsely permutes the left cosets of the vertical subgroup of $\Gamma_A$. Then for any $\epsilon> 0$ there is an $M\gg \epsilon$ that depends on $\epsilon$ and the quasi-isometry constants of $\Psi$ such that 
    if $P$ is an $(\epsilon,M)$-parallelogram the $\Psi(P)$ is also a parallelogram. 
\end{proposition}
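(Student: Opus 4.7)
My strategy is to pass from the quasi-isometry of $\Gamma_A$ to the induced boundary data and then analyze how the parallelogram structure is forced by the interaction between bi-Lipschitz boundary maps and the discrete lattice $Q(\Z^2) \subset \R^2$. First I would invoke Eskin--Fisher--Whyte \cite{Eskin2012} to conclude that $\Psi$ is at bounded distance from a height-preserving quasi-isometry of $SOL$ that coarsely sends vertical geodesics to vertical geodesics, inducing bi-Lipschitz boundary maps $f, g : \R \to \R$. The induced permutation of cosets of the vertical subgroup is then $\psi = (f, g)$ acting on the base $\R^2$ of $SOL$ and preserving the lattice $Q(\Z^2)$, which is discrete in $\R^2$ even though each coordinate projection is dense.

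Next, by \Cref{lemma:quadqi} the image $\Psi(P)$ is an $(\epsilon', M')$-quadrilateral with $\epsilon' \leq K^2 \epsilon$ and $M' \geq M/K^2$. Choosing $M$ large enough (depending on $\epsilon$ and the QI constants of $\Psi$) ensures $M' \gg \epsilon'$, so that \Cref{lemma:quadstruct} applies to $\Psi(P)$ as well. Combined with the description of $(\epsilon, M)$-parallelograms in $SOL$ from the preceding proposition, I may assume up to relabeling that $P = \{0, w, w+v, v\}$ with $w = (x_w, y_w)$ and $v = (x_v, y_v)$, where $|x_w|$ is small relative to $|x_v|$ and $|y_v|$ is small relative to $|y_w|$, the smallness controlled by the ratio $\epsilon / M$.

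The key algebraic step is then to show that the defect
\[
\Delta := \psi(0) + \psi(w+v) - \psi(w) - \psi(v) \in Q(\Z^2)
\]
vanishes. The bi-Lipschitz bounds on $f$ and $g$, combined with the smallness of $|x_w|$ and $|y_v|$, should give an upper bound on $|\Delta|$ that shrinks as $M/\epsilon$ grows. Since $Q(\Z^2)$ is a discrete lattice in $\R^2$, once $|\Delta|$ falls below the systole of this lattice I may conclude $\Delta = 0$, so that $\Psi(P)$ is a genuine parallelogram. This discreteness is essential: it replaces any coarse-equality statement with an exact one, which is what the proposition demands.

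The hard part will be controlling $|\Delta|$ sharply enough. The naive estimates $|\Delta_x| \leq 2K|x_w|$ and $|\Delta_y| \leq 2K|y_v|$ do not on their own shrink with $M$ in a general $(\epsilon, M)$-parallelogram, since $|x_w|$ and $|y_v|$ are only constrained through their products with $|y_w|$ and $|x_v|$ rather than by absolute bounds. To close this gap I plan to follow Schwartz's geometric argument and exploit the fact that $\Psi$ coarsely preserves the specific heights at which pairs of vertical geodesics of $P$ come close; this extra rigidity, unavailable from the bi-Lipschitz property on a single coordinate alone, is what should allow $|\Delta|$ to be bounded by a quantity that genuinely goes to zero as $M \to \infty$.
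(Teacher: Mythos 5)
Your reduction matches the paper's up to the final step: normalize so that $\Psi(P)$ has entries $0,b,c,d$ with $d$ opposite $0$, apply \Cref{lemma:quadqi} to see that the image is an $(\epsilon',M')$-quadrilateral with $\epsilon'\leq K^2\epsilon$ and $M'\geq M/K^2$, and use discreteness of $Q(\Z^2)$ to upgrade a coarse statement to the exact identity $d=b+c$. But the step you flag as ``the hard part'' is exactly where the proof lives, and the route you propose for closing it --- bounding the Euclidean norm of $\Delta=d-(b+c)$ below the Euclidean systole of $Q(\Z^2)$ --- uses the wrong invariant and cannot be made to work. The set of points $p$ satisfying both $\delta(p,b)<\epsilon'$ and $\delta(p,c)<\epsilon'$ is the intersection of two hyperbola-bounded neighborhoods of the coordinate crosses through $b$ and $c$; when $\delta(b,c)\geq M'\gg\epsilon'$ this intersection splits into two components, one containing $0$ and the other containing both $d$ and $b+c$, and the latter is essentially a rectangle with side lengths about $2\epsilon'/|y_b-y_c|$ and $2\epsilon'/|x_b-x_c|$. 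Only the \emph{product} of these side lengths is controlled (it is at most $4\epsilon'^2/M'$); each factor separately, and hence the Euclidean diameter of the component, can be arbitrarily large for a general $(\epsilon,M)$-parallelogram, because $|x_b|,|y_b|,|x_c|,|y_c|$ are constrained only through products. So $|\Delta|$ need not be small, and the Euclidean systole argument does not close.

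What actually saves the argument is a Diophantine rather than Euclidean form of discreteness: since $A$ is hyperbolic in $SL(2,\Z)$, its eigendirections have quadratic irrational slopes, so there is a constant $c_0>0$ with $\delta(p,q)=|x_p-x_q|\,|y_p-y_q|\geq c_0$ for all distinct $p,q\in Q(\Z^2)$ --- this is the quantitative version of the paper's remark that $|\Delta x|,|\Delta y|$ can be arbitrarily small but never zero. Since $d$ and $b+c$ lie in the same component of the intersection, $\delta(d,b+c)\leq 4\epsilon'^2/M'\leq 4K^6\epsilon^2/M$, and choosing $M$ large enough that this is below $c_0$ forces $d=b+c$. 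This is precisely the content of the paper's (and Schwartz's) hyperbola picture: ``each region contains only one lattice point'' is a statement about the $\delta$-size of the region, not its Euclidean size. Your proposed fallback of exploiting the coarse heights at which pairs of geodesics come close is not needed once this product lower bound is in hand. Two smaller points: the normalization should be applied to the image quadrilateral $\Psi(P)$ rather than to $P$, since it is the image whose parallelogram identity is at stake; and $b+c$ lies in the relevant region because $\delta(b+c,b)=\delta(c,0)$ and $\delta(b+c,c)=\delta(b,0)$ by translation invariance of $\delta$, a fact worth stating explicitly.
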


\begin{proof}
For simplicity, assume that 
$$\Psi(P)=\begin{bmatrix}
    0 & b \\ c & d
\end{bmatrix}.$$
This is always possible by post composing by an affine map. 
It suffices to prove that $d=b+c$. 

\begin{figure}[htbp]\label{hypfig}
\begin{center}
 \begin{overpic}[abs,unit=1mm,scale=.2]{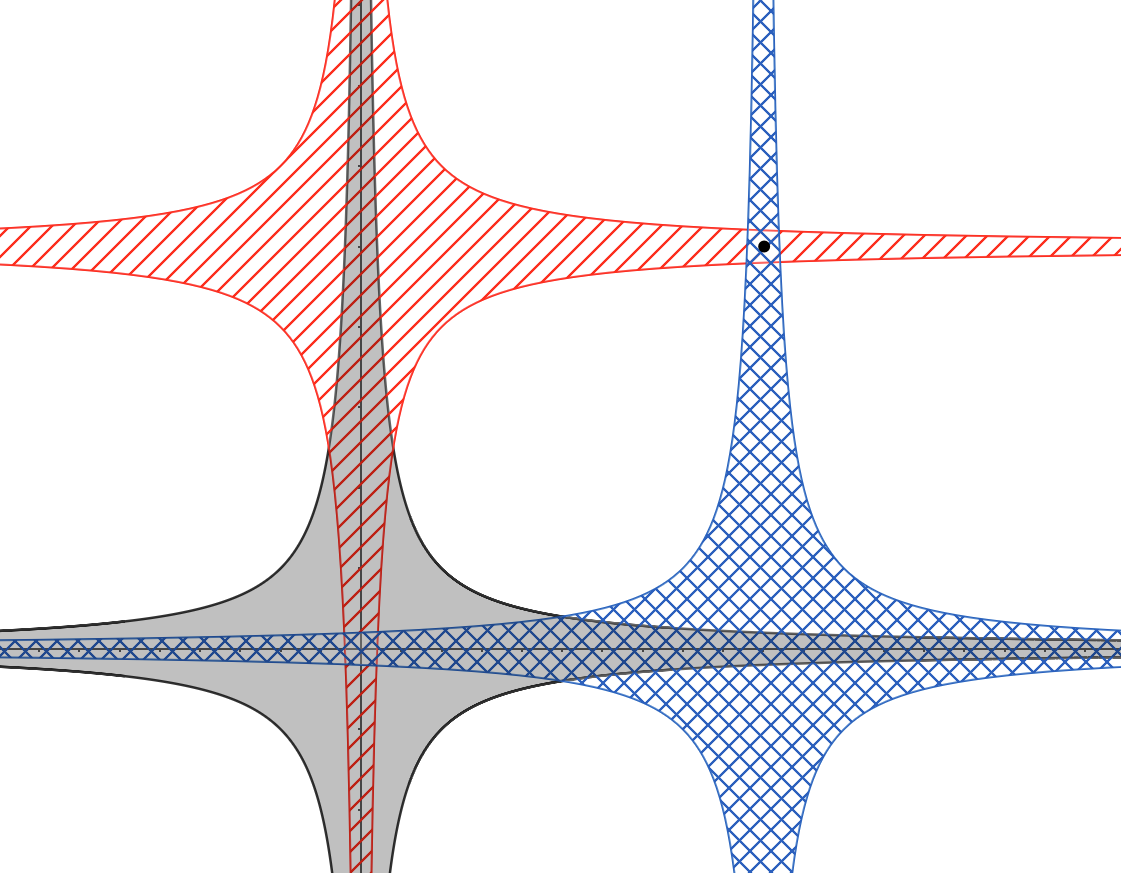}
 \put(31,23){$A$}
 \put(32,50){\color{red}$B$}
 \put(62,22){\color{blue}$C$}
 \put(56,47){$d=b+c$}
     
 \end{overpic}
\caption{The shaded region, $A$, bounded by the black hyperbolas 
 centered around the origin consists of points $p$ with $\delta(0,p)<\epsilon'$. While regions $B$(red and dashed)  and $C$ (blue and cross-hatched)  are those points $p$ with $\delta(b,p)<\epsilon'$ and $\delta(c,p)<\epsilon'$ respectively. If $M$ is large enough then the intersection of $B$ and $C$ has two regions, and each of these regions contains only one lattice point. 
 }
\label{hypfig}
\end{center}
\end{figure}

Recall by Lemma \ref{lemma:quadqi} that an $(\epsilon,M)$-quadrilateral gets mapped to an $(\epsilon', M')$-quadrilateral where $\epsilon' \leq K^2\epsilon$ and $M'\geq M/K^2$.

Observe that both $p=d$ and $p=b+c$ satisfy the following two inequalities:
$$\delta(p,b)<\epsilon', \quad \delta(p,c)<\epsilon'$$
These inequalities define two regions of the plane $B$,$C$ bounded by hyperbolas (see Figure \ref{hypfig}).
The solution of these two inequalities lies in the intersection of the regions. For fixed $\epsilon'$ we can increase  $M'$ so that if $\delta(b,c)> M'$ then the intersection of two regions consists of two disjoint regions. One region contains $0$ and the other contains both $d$ and $b+c$. By increasing $M$ and hence $M'$ we can shrink these regions of intersection and ensure that both regions only contain a single lattice point, in other words that $d=b+c$.
 Hence, for every $\epsilon$ there exists a sufficiently large $M$ such that when $P$ is an $(\epsilon,M)$-parallelogram then $\Psi(P)$
 is an $(\epsilon',M')$-quadrilateral and hence also a parallelogram.
\end{proof}

In order to show that all parallelograms are mapped to parallelograms, Schwartz shows that
you can ``generate" all parallelograms with `large' ones. In our notation, this means that for any $(\epsilon,M)$, we can find a generating  set $\Sigma  \subset B$ so that for any $v,w \in \Sigma $ we have that
$$\begin{bmatrix}
     0  &v\\
w  &  v+w
\end{bmatrix}$$
is an $(\epsilon, M)$-parallelogram.

Then for any parallelogram
$$P=\begin{bmatrix}
     a & b \\ c & d 
\end{bmatrix}$$
first write  $c-a=d-b=v_1+ \cdots v_k$ where $v_i \in \Sigma $ and define 
$$P_j= \begin{bmatrix}
     a + v_1+  \cdots +v_{j-1}   & b + v_1+  \cdots +v_{j-1}\\
a+ v_1+  \cdots+ v_{j-1}+v_{j} & b + v_1+  \cdots +v_{j-1} + v_j
\end{bmatrix}.$$
If $\Psi(P_j)$ is a parallelogram for all $j=1,\ldots k$ 
then $$\psi(a) +  \psi(b + v_1) =\psi(b) + \psi(a+ v_1)$$
$$\psi(a+v_1) +  \psi(b + v_1+ v_2) =\psi(b+v_1) + \psi(a+ v_1+v_2)$$
$$\vdots$$
$$\psi(a+v_1 + \cdots v_{k-1}) +  \psi(b + d- b) =\psi(b+v_1+ \cdots v_{k-1}) + \psi(a+ c -a).$$
By adding all equations we get $\psi(a) +  \psi(d) =\psi(b) + \psi(c)$ and so $\Psi(P)$ is also a parallelogram. 
To show that $\Psi(P_j)$ is a parallelogram, rotate the entries by $\pi/4$ and repeat the above argument for each $P_j$.  
For example $P_1$ becomes 
$$P'_1= \begin{bmatrix}
     a + v_1  & a \\
     b + v_1 & b \\
\end{bmatrix}=
\begin{bmatrix}
     b + (a - b) + v_1  & b + (a-b) \\
     b + v_1 & b \\
\end{bmatrix}
$$
and if we write $a+v_1- (b+v_1)=a-b=v_{11} + \cdots + v_{1k_1}$ as a sum of generators in $\Sigma$ then 
$$P_{11}'= \begin{bmatrix}
      b+v_1   & b\\
 b+v_1+ v_{11}&  b+v_{11}
\end{bmatrix},
P_{12}'=
\begin{bmatrix}
      b+v_1+ v_{11}  & b+v_{11}\\
 b+v_1+ v_{11} + v_{12}  &  b+v_{11}+v_{12} 
\end{bmatrix}, \cdots$$
all of which are $(\epsilon, M)$-parallelograms and so the result follows.

\subsection{Rigidity of solvable Baumslag-Solitar groups}
Next we rewrite and simplify Taback's proof from \cite{Taback2000} using $(\epsilon,M)$-quadrilaterals.

\begin{proposition}\label{prop:taback}\cite{Taback2000}
Let $\Psi$
    be a quasi-isometry that induces a biLipschitz map $\psi$ that coarsely permutes the cosets of the vertical subgroup of $BS(1,n)$. Then for any $\epsilon> 0$ there is an $M\gg \epsilon$ that depends on $\epsilon$ and the biLipschitz constants of $\psi$ such that 
    if $P$ is an $(\epsilon,M)$-parallelogram in $B=\Z[\frac{1}{n}]$ then $\psi(P)$ is also a parallelogram. 
    \end{proposition}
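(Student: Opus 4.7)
The plan is to adapt the argument of Proposition \ref{prop:schwartz}, repackaging Taback's original proof in the $(\epsilon,M)$-quadrilateral language already used above. Writing $P = \begin{bmatrix} p_1 & p_2 \\ p_4 & p_3 \end{bmatrix}$, I would first post-compose $\psi$ with the translation $x \mapsto x - \psi(p_1)$, which is affine on $B = \Z[\tfrac{1}{n}]$ and therefore preserves both $\delta$ and the property of being a parallelogram, in order to reduce to
$$\psi(P) = \begin{bmatrix} 0 & b \\ c & d \end{bmatrix};$$
then it suffices to show $d = b + c$. By Lemma \ref{lemma:quadqi} this image is an $(\epsilon',M')$-quadrilateral with $\epsilon' = K^2 \epsilon$ and $M' = M/K^2$, and a direct check shows that both $d$ and $b+c$ lie in
$$R := \{p \in \Z[\tfrac{1}{n}] : \delta(p,b) \leq \epsilon' \text{ and } \delta(p,c) \leq \epsilon'\}.$$

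To pin down $R$, I would write $b = r_b n^{k_b}$ and $c = r_c n^{k_c}$ in lowest terms (so $r_b,r_c$ are integers coprime to $n$) and, after possibly swapping $b$ and $c$, assume $k_b \leq k_c$. Exactly as in the Baumslag-Solitar case of Proposition 2.7, the hypothesis $\delta(b,c) \geq M'$ forces $k_c - k_b$ to be large. For any $p \in R \setminus \{b, c\}$, writing $p - b = r_1 n^{k_1}$ in lowest terms and expanding $p - c = (p-b) + (b-c)$, the constraint $\delta(p,c) \leq \epsilon'$ becomes a diophantine inequality whose shape depends on the position of $k_1$ relative to $k_b$ and $k_c$. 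Splitting into the cases $k_1 < k_b$, $k_1 = k_b$, $k_b < k_1 < k_c$, $k_1 = k_c$, and $k_1 > k_c$, a short computation shows that the only solutions with $|r_1| \leq \epsilon'$ are $(k_1, r_1) = (k_b, -r_b)$, giving $p = 0$, and $(k_1, r_1) = (k_c, r_c)$, giving $p = b+c$. Hence $R = \{0, b+c\}$ once $M'$ is large enough compared with $\epsilon'$ and $n$. Since the quadrilateral hypothesis includes $\delta(0,d) \geq M' > \epsilon'$, the vertex $d$ cannot equal $0$, forcing $d = b + c$; choosing $M$ large enough (depending on $\epsilon$, $n$, and the biLipschitz constants of $\psi$) makes $M' = M/K^2$ large enough for the case analysis to go through.

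The main obstacle will be the diophantine case analysis that forces $R = \{0, b+c\}$. Unlike Schwartz's Euclidean setting, where the intersection of two hyperbolic strips in $\R^2$ has two visibly disjoint connected components, here the region $\{p : \delta(p,b) \leq \epsilon'\}$ fragments into infinitely many ``branches'' indexed by the $n$-adic valuation of $p - b$. Ruling out the branches other than the ones through $0$ and $b+c$ requires observing that on each such branch the dominant term $r_c n^{k_c - k_b}$ inside $p - c$ cannot be absorbed by any term of size at most $\epsilon'$ without forcing an $n$-adic identity that contradicts $r_1$ being coprime to $n$; this is the adelic analog of the hyperbola picture used in Figure \ref{hypfig}.
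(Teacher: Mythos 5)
Your argument is correct, but it is not the route the paper takes for $BS(1,n)$: you have transplanted the Schwartz-style proof of Proposition \ref{prop:schwartz} (normalize so $\psi(P)=\begin{bmatrix} 0 & b \\ c & d\end{bmatrix}$, then show the set $R$ of points $\delta$-close to both $b$ and $c$ consists of exactly the two points $0$ and $b+c$, so $d=b+c$), replacing the two-hyperbola intersection picture in $\R^2$ with an $n$-adic branch-by-branch analysis. The paper instead follows Taback more directly: it writes the four cyclic side-differences as $p_{i+1}-p_i=r_in^{k_i}$ with $r_i$ coprime to $n$, invokes Lemma \ref{lemma:quadstruct} to order the valuations $k_i$, and then computes the diagonal $p_3-p_1$ in two ways around the quadrilateral; uniqueness of the $n$-adic valuation forces $k_2=k_4$, $k_1=k_3$, $r_2=-r_4$, $r_1=-r_3$, hence $p_2-p_1=p_3-p_4$. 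The paper's version is shorter because the identity $(p_2-p_1)+(p_3-p_2)+(p_4-p_3)+(p_1-p_4)=0$ does all the work in one valuation comparison, whereas your version requires the five-case split on the valuation of $p-b$ (and the verification, which you correctly flag as the main obstacle, that no intermediate branch can absorb the dominant term $r_cn^{k_c-k_b}$). What your version buys is a uniform template across the SOL and Baumslag--Solitar cases and an explicit description of the full solution set $R=\{0,b+c\}$, which makes the analogy with Figure \ref{hypfig} precise; the one bookkeeping point to make explicit if you write it out in full is that $\delta(b,c)\geq M'$ forces $k_c-k_b\gtrsim\log_n(M'/\epsilon')$, which is exactly the quantitative input needed to kill the spurious branches.
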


\begin{proof}

Suppose that $P$ is an $(\epsilon,M)$-parallelogram in $B=\Z[\frac{1}{n}]$. Then by Lemma \ref{lemma:quadqi} we know that $\psi(P)$ is
an $(\epsilon',M')$-quadrilateral where $\epsilon'\leq K^2\epsilon$ and $M'\geq M/K^2$. 
Let $$\psi(P)=\begin{bmatrix}
     p_1 & p_2 \\ p_4 & p_3 
\end{bmatrix}$$
and for $i=1,2,3,4$ write $p_{i+1} -p_i = r_in^{k_i}$ where $r_i$ is an integer not divisible by $n$.
Then we know that the $\delta(p_i,p_{i+1})=|r_i| \leq \epsilon'$  but $|k_i - k_{i+1}|$ is large for all $i$ (mod $4$) by Lemma \ref{lemma:quadstruct}. Without loss of generality we may assume that $k_1>k_2$ and $k_3>k_4$.
Using this information we note that
\begin{eqnarray*}
    p_{3}-p_1&=&(p_{3}-p_{2}) + (p_{2}-p_1)\\
    &=&r_{2}n^{k_{2}}+ r_1n^{k_1}=n^{k_{2}}\left( r_2 + r_1n^{k_1-k_{2}}\right)
\end{eqnarray*}
while 
\begin{eqnarray*}
    p_{1}-p_3&=&(p_{1}-p_{4}) + (p_{4}-p_3)\\
    &=&r_{4}n^{k_{4}}+ r_3n^{k_3}=n^{k_{4}}\left( r_4 + r_3n^{k_3-k_{4}}\right)
\end{eqnarray*}
Now since $k_1 > k_2$ and $k_3>k_4$ we know that $r_2 + r_1n^{k_1-k_{2}}$ and $r_4 + r_3n^{k_3-k_{4}}$  are integers not divisible by $n$ and so comparing the above two expressions we must have $k_2=k_4$ and
$r_2 + r_1n^{k_1-k_{2}}=-r_4 - r_3n^{k_3-k_{4}}$. By symmetry we also have that $k_1=k_3$. 
This allows us to conclude that $r_2=-r_4$ and $r_1=-r_3$.
and so  $$p_2-p_1= r_1n^{k_1}= -r_3n^{k_3}=p_3-p_4$$ so that $p_1+p_3=p_2+p_4$ as desired. 

\end{proof}
To show that $\psi$ sends all parallelograms to parallelogram, a similar argument to the one for SOL is used to show that any parallelogram can be written as a sum of `large' parallelograms.
This is possible essentially because we can write $n^{k+1}$ as a sum of $n$ copies $n^k$ so that a vertical geodesic that branches from the $0$ geodesic at height $k+1$ can be written as a sum of vertical geodesics that branch at height $k$. In the lamplighter case this is not possible and so it is not always possible to generate all parallelograms with $(\epsilon, M)$-parallelograms. For more details see Section \ref{Section: MainResults}.
\section{Rigidity and Non-rigidity in Lamplighter groups} \label{Section: MainResults}

In this section, we discuss pattern preserving  quasi-isometries in Lamplighter groups. In \Cref{subsec:lamplighter_patterns}, we first show that, up to swapping tree factors, a pattern preserving isometry of $DL(2,2)$ is induced by an generalized affine map of $B=\InfSumtwo$. The analogous fact about pattern preserving isometries of the standard model spaces for $BS(1,n)$ and lattices in SOL is immediate as these model spaces are more rigid then $DL(2,2)$.  In Section \ref{subsec:large_grams} we show that pattern preserving quasi-isometries of $DL(2,2)$ are ``affine on the large scale'' similar to Propositions \ref{prop:schwartz} and \ref{prop:taback} for lattices in SOL and $BS(1,n)$.  However, unlike in $BS(1,n)$ and lattices in SOL, this large scale behavior cannot be extended to all parallelograms. 
In \Cref{subsec:counterexample}, we construct a family of pattern preserving quasi-isometries and give a counterexample to show that the pattern preserving condition no longer implies the same rigidity that we saw in the SOL and Baumslag-Solitar cases. 

\subsection{Pattern Preserving Isometries in Lamplighter Groups} \label{subsec:lamplighter_patterns}

In this section, we study pattern preserving isometries of the Diestel-Leader graph, focusing on the case $DL(2,2)$ which is the model space for the lamplighter group $L_2$. All isometries of $DL(2,2)$ can be written as a composition of the following three types isometries: an isometry that swaps the tree factors (we call any such isometry an ``orientation reversing isometry"), an isometry that translates up (or down) in the first tree and down (or up) by the same amount in the second tree and an isometry that fixes height and orientation.  Specifically we have 
$$Isom(DL(2,2))= ((Isom (\Q_2) \times Isom(\Q_2)) \rtimes \Z) \rtimes \Z_2.$$
Thinking of $DL(2,2)$ as a Cayley graph, the first type of isometry cannot be induced by left multiplication by a group element while the second type can be chosen to be induced by left multiplication by elements of the vertical subgroup $\left< t \right>$. The last type contains both isometries that are induced by left multiplication by $B= \InfSum$ and also many others that are not induced by left multiplication.

We can now prove Theorem \ref{thrm:affineisometry}. Namely that if $\Psi: DL(2,2)\to DL(2,2)$ is an orientation preserving pattern preserving isometry then there is some $g \in L_2$ such that $\Psi$ is bounded distance from left multiplication by $g$. We will give a geometric proof of this proposition here but the result also follows from Proposition \ref{prop:algisom} whose proof relies on understanding $\delta$.

\begin{proof}\emph{(Of Theorem \ref{thrm:affineisometry})}
Recall that orientation preserving means that the two tree factors are not interchanged. This assumption is necessary since no group element can interchange these factors. Any orientation preserving isometry that preserves the set of cosets can be composed with the action of a group element to obtain an isometry that sends the identity $\langle t \rangle$ coset to itself. Similarly we can compose such an isometry with a group element acting by translating up or down to obtain an isometry that  preserves height. Thus it is sufficient to  assume that $\Psi$ is an isometry that preserves orientation and height, and that the induced map $\psi$ permutes the cosets,  and fixes the identity coset and to show that $\Psi$ must be the identity map.

\begin{figure}[h]
    \centering
    \includegraphics[scale=.23]{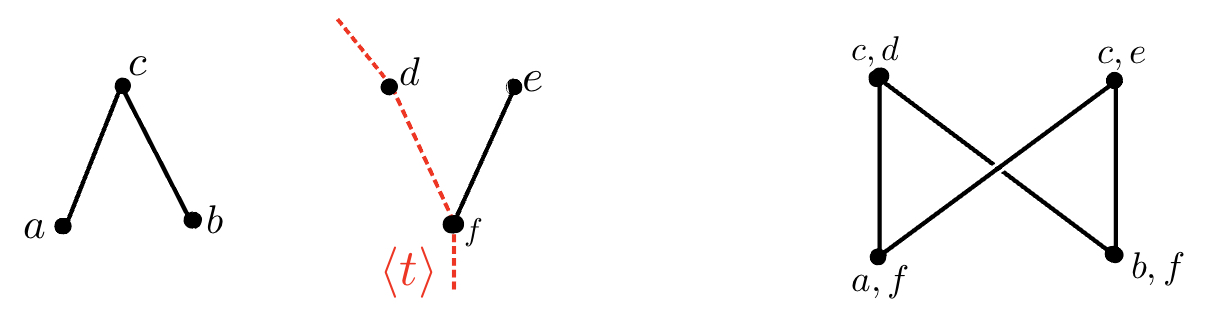}
    \caption{A height one portion of $DL(2,2)$ on the right and its tree factors on the left. Vertex labels illustrate the proof of Theorem \Cref{thrm:affineisometry}. The red (dotted) line represents the vertical geodesic corresponding to the identity coset in the right-hand factor.  }
    \label{fig_prooffig}
\end{figure}

Now, suppose that $\Psi$ is not the identity, and consider the action of $\Psi$ on the tree to the left, in the horocylic product. There exists some point $a$ which is not fixed, but is the child of some fixed point $c$. Because $\Psi$ acts as a height preserving isometry, this means that $a$ and $b$ (the two children of $c$) are permuted (See \Cref{fig_prooffig}).

We are also assuming that the identity coset $\langle t\rangle$ is preserved by $\psi$. The identity coset is represented by a vertical geodesic in $DL(2,2)$. This geodesic appears as a pair of vertical geodesics one in each tree. In the right-hand tree, there is some segment of this geodesic at the same height as the edge from $c$ to $a$. Label the vertices of this segment $d$ and $f$ and the other child of $f$ as $e$ (See \cref{fig_prooffig}). Note that $f$ and $d$ are fixed point-wise by $\Psi$ because the identity coset is fixed by $\psi$ and $\Psi$ is height preserving. Hence, $e$ is also fixed because $\Psi$ is an isometry. 

We have now produced a quadrilateral $(c,d), (b,f), (c,e), (a,f)$ in $DL(2,2)$ of height one, where $\Psi$ permutes $(a,f)$ and $(b,f)$, but leaves $(c,d)$  and $(c,e)$ fixed. However, because cosets of $\langle t \rangle$  are vertical geodesics that partition the vertices of $DL(2,2)$, there are exactly two cosets, passing through any height one quadrilateral. Any pattern preserving isometry must either fix both cosets or permute them. If both cosets are fixed then all four vertices are fixed, but in our constructed quadrilateral $(a,f)$, and $(b,f)$ are not fixed so this is not the case for our quadrilateral. If the cosets are permuted, then opposite edges of the quadrilateral are permuted. In particular this means that the edges $(c,d)-(a,f)$ and $(c,e)-(b,f)$ are permuted and the vertices that $(c,d)$ and $(c,e)$ are permuted. This again contradicts our construction of this quadrilateral. Thus, under our assumptions about $\Psi$, such a quadrilateral is impossible, providing a contradiction.

\end{proof}

This proposition shows that our pattern preserving assumption is a strong hypothesis in the case where the quasi-isometry is an isometry. We now consider quasi-isometries that are not isometries, and try to imitate the arguments made in the case of lattices in $SOL$ and $BS(1,n)$. We will see similar behavior carry over for $(\epsilon, M)$ parallelograms for $M\gg \epsilon$, but in general quasi-isometries of $DL(2,2)$ are not affine, in contrast to the behavior we saw for the other two groups. 

\subsection{Large Parallelograms}\label{subsec:large_grams}
Recall that in the lamplighter case $log_2(\delta(p,q))$ has a very nice interpretation. Namely
if $p=(x_i), q=(y_i) \in \InfSumtwo$ and 
$x_i=y_i$ for all $i> \ell_-$ and all $i< \ell_+$. Then $log_2(\delta(p,q))=\ell_{-}-\ell_{+}$, that is, it measures the length of the smallest interval such that $x_i=y_i$ outside this interval. This interval is denoted $supp(p-q)$.

\begin{proposition}\label{Prop:eMlamplighter}
Let $\Psi$
    be a quasi-isometry and $\psi$ the induced biLipschitz map that coarsely permutes the cosets of the vertical subgroup of $L_2$. Then for any $\epsilon> 0$ there is an $M\gg \epsilon$ that depends on $\epsilon$ and the quasi-isometry constants of $\Psi$ such that 
    if $P$ is an $(\epsilon,M)$-parallelogram in $B=\InfSumtwo$ then $\psi(P)$ is also a parallelogram. 
    \end{proposition}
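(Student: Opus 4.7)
The plan is to obtain this proposition as a direct consequence of Lemma \ref{Theorem: Main} combined with Lemma \ref{lemma:quadqi}, in the spirit of the arguments for $\Gamma_A$ and $BS(1,n)$ in Propositions \ref{prop:schwartz} and \ref{prop:taback}. First I would apply Lemma \ref{lemma:quadqi} to the $(\epsilon,M)$-parallelogram $P$ to see that $\psi(P)$ is an $(\epsilon',M')$-quadrilateral with $\epsilon' \leq K^2 \epsilon$ and $M' \geq M/K^2$, where $K$ depends only on the quasi-isometry constants of $\Psi$.

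Next I would translate the quadrilateral inequalities into the support conditions appearing in Lemma \ref{Theorem: Main}. In the lamplighter group one has $\delta(p,q) = 2^{|supp(p-q)|}$, so if the entries of $\psi(P)$ are $q_1,q_2,q_3,q_4$ (with $q_1,q_3$ and $q_2,q_4$ diagonally opposite), the inequality $\delta(q_i,q_{i+1}) \leq \epsilon'$ becomes $|supp(q_{i+1}-q_i)| \leq \log_2 \epsilon'$, while $\delta(q_i,q_{i+2}) \geq M'$ becomes $|supp(q_{i+2}-q_i)| \geq \log_2 M'$. Setting $S$ just above $\log_2(K^2 \epsilon)$ and taking $M$ large enough that $\log_2(M/K^2) > 2S$ (so that $M$ is on the order of $K^6 \epsilon^2$) then forces, after the relabeling $a=q_1$, $b=q_2$, $c=q_4$, $d=q_3$, all four hypotheses of Lemma \ref{Theorem: Main} to hold.

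The lemma's conclusion is $a+b=c+d$, i.e.\ $q_1+q_2 = q_3+q_4$. Because every element of $\InfSumtwo$ is its own inverse, this is equivalent to $q_1+q_2+q_3+q_4 = 0$, and therefore also to the parallelogram condition $q_1+q_3 = q_2+q_4$, which is what is to be proved. The real content of the argument sits entirely inside Lemma \ref{Theorem: Main} (whose proof I would address separately, by a direct support analysis in $\InfSumtwo$); the remaining obstacle here is purely the bookkeeping needed to pass between the metric $\delta$, the $(\epsilon,M)$-quadrilateral formalism, and the support-size conditions, while tracking how $M$ must grow with $\epsilon$ and with the quasi-isometry constants of $\Psi$. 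Unlike the $\Gamma_A$ and $BS(1,n)$ settings, there is no follow-up step that extends the conclusion from large parallelograms to all parallelograms, precisely because the counterexamples of Theorem \ref{thrm:counterexamples} show that such an extension must fail in $L_2$.
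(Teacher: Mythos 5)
Your proposal is essentially the paper's own proof: the paper likewise applies Lemma \ref{lemma:quadqi} to conclude $\psi(P)$ is an $(\epsilon',M')$-quadrilateral, chooses the constants so that $\log M' > 2\log \epsilon'$, and then establishes inline (as a Claim which it notes is exactly Lemma \ref{Theorem: Main}) that any $(2^S,2^{2S})$-quadrilateral in $\InfSumtwo$ is a parallelogram, via the same support analysis you propose to carry out separately. Your bookkeeping of the constants (roughly $M\sim K^6\epsilon^2$) and your remark that the conclusion $q_1+q_2=q_3+q_4$ is equivalent to the parallelogram condition $q_1+q_3=q_2+q_4$ because every element of $\InfSumtwo$ is $2$-torsion match the paper's reasoning.
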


\begin{proof}

    By Lemma \ref{lemma:quadqi} we know that if $P$ is an $(\epsilon, M)$-quadrilateral then $\psi(P)$ is
an $(\epsilon',M')$-quadrilateral where $\epsilon'\leq K^2\epsilon$ and $M'\geq M/K^2$.
We need to show that for any $\epsilon$ there is an $M$ such that if $P$ is an $(\epsilon',M')$ quadrilateral
then $P$ is a parallelogram.  In fact it is enough to choose the constants so that $\log{M'}> 2\log{\epsilon'}$.
\begin{claim}
    Let  $P$ be an $(2^S,2^{2S})$ quadrilateral then $P$ is a parallelogram.
\end{claim}
    This claim proves Lemma \ref{Theorem: Main} from the introduction and
    follows nicely from the interpretation of $\log_2(\delta(p,q))=supp(p-q)$. Suppose 
    $$P=\begin{bmatrix}
 a&b\\
 c&d
 \end{bmatrix}.$$
Then 
 \[ max(supp(a-b),supp(a-c)) \leq S\]
 \[ min(supp(a-d), supp(b-c)) \geq 2S\] 

Next we will show that  
$supp(a-b)\cap supp(a-c)=\emptyset$. 
Consider the smallest integer $N_+$ such that $n$ is outside $supp(a-b)\cup supp(a-c)$ for all $n\geq N_+$. We must have $a_n=b_n=c_n$ and so $n$ must not be in $supp(b-c)$. Similarly, if $N_-$ is the largest integer such that for all $n\leq N_{-}$, $n$ is outside  $supp(a-b)\cup supp(a-c)$, then $n$ cannot be in $supp(b-c)$.  This means the largest integer in $supp(b-c)$  and the smallest integer in $supp(b-c)$ are both in $supp(a-b)\cup supp(a-c)$. See \Cref{fig_DisjointSupport}. Hence, $supp(a-b)\cap supp(a-c)=\emptyset$, as if  these subsets of integers overlapped then $supp(b-c)$ would be too small, which contradicts the assumption that the support is at least $2S$.

Note that by symmetry we also have that $supp(b-a)\cap supp(b-d)=\emptyset, supp(d-b)\cap supp(d-c)=\emptyset$ 
and $supp(c-a)\cap supp(c-d)=\emptyset$.

Now, fix a given $i$. We would like to show that for every $i$, the equation $a_i+d_i=c_i+b_i$ holds. If $a_i=b_i=c_i=d_i$ then we are done, so assume without loss of generality that $a_i\neq b_i$. That means $i\in supp(a-b)$, and by the above claim this implies that $i\notin supp(a-c)$ and $i\notin supp(b-d)$ hence that $a_i=c_i$ and $b_i=d_i$. This shows that $a_i+d_i=c_i+b_i$ as desired. 
\end{proof}

\begin{figure}
    \centering
    \includegraphics[scale=0.21]{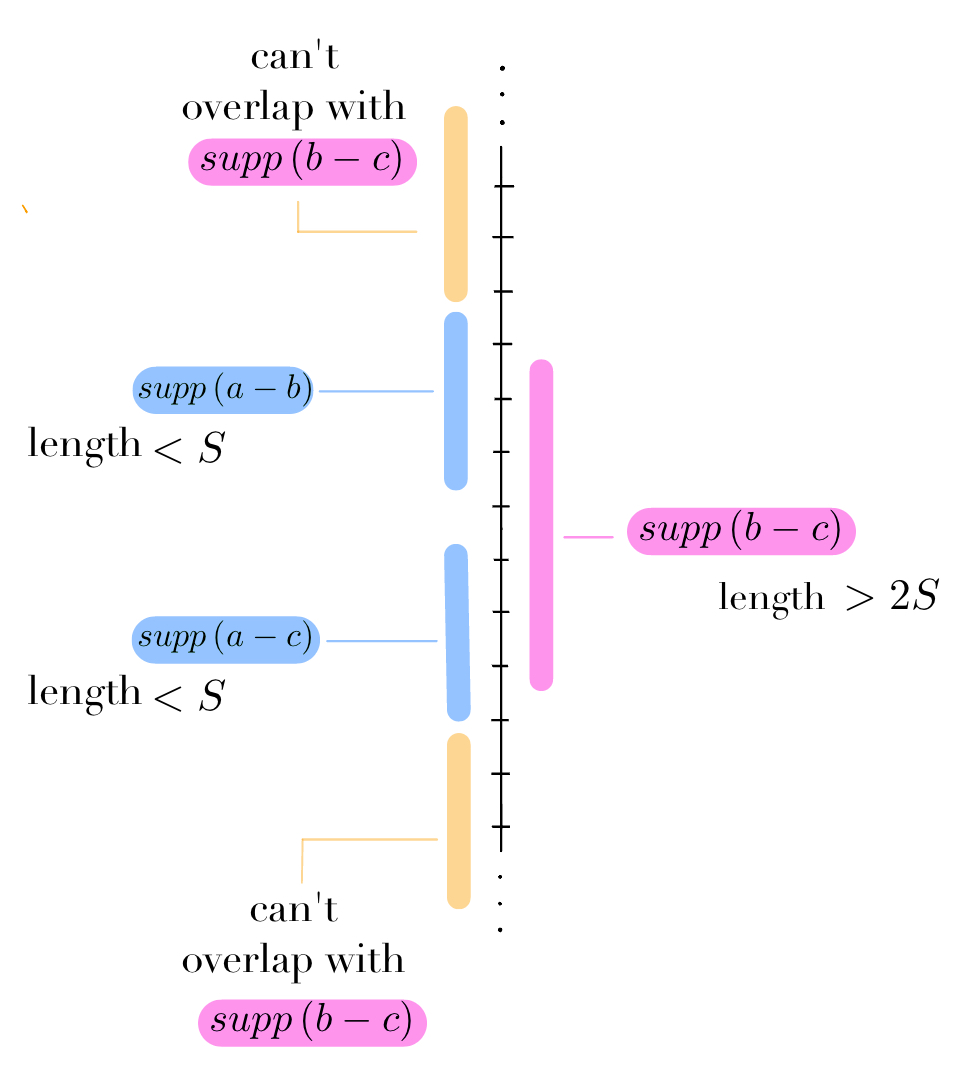}
    \caption{
    In an $(2^S,2^{2S})$-parallelogram formed by $a,b,c,d$, we know that  $supp(a-b)$ and $supp(a-c)$ both have length at most $S$, while $supp(b-c)$ has length at least $2S$. This implies that $supp(a-c)\cap supp(a-b)=\emptyset$.} 
    \label{fig_DisjointSupport}
\end{figure}

We have now shown that the map $\psi$ is parallelogram preserving when restricted to $(\epsilon,M)$-parallelograms for $M \gg \epsilon$.
This mirrors the SOL and Baumslag Solitar cases. 
However, in the SOL and Baumslag-Solitar case, the result generalizes to all parallelograms and not only ones that are `large' enough. This is done by finding a generating set $\Sigma$ for $B$ such that all parallelograms based on elements in $\Sigma$ are $(\epsilon,M)$ parallelograms with $M\gg\epsilon$.

In the lamplighter case, this argument only works when $(\epsilon,M)$ is very small. Namely a generating set for $B$ must contain a generator for $\Z_2$ at each index $i$. Therefore the best one can hope for is that quadrilaterals based on $\Sigma$ are $(\epsilon, M)$ quadrilaterals for $\epsilon=2^1$ and $M=2^2$. 
In addition, such quadrilaterals are useful only in the case that $\Psi$ is a $(1,0)$-quasi-isometry, i.e. an isometry. This gives us a new proof of Theorem \ref{thrm:affineisometry}.

\begin{proposition}\label{prop:algisom} If $\Psi:L_2\to L_2$ is a $(1,0)$ pattern preserving quasi-isometry, then $\psi$ must be parallelogram preserving.

\end{proposition}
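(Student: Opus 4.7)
The plan is to exploit the $(1,0)$ hypothesis to keep the biLipschitz constant in \Cref{lemma:quadqi} equal to $1$, so that even the smallest $(\epsilon,M)$-parallelograms in $B=\InfSumtwo$ fall inside the regime in which \Cref{Theorem: Main} applies. The resulting functional equation, together with the characteristic-$2$ arithmetic of $B$, will in fact force $\psi$ to be affine, which is strictly stronger than parallelogram preserving.

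First I will set up a family of minimal parallelograms. Let $e_k\in B$ denote the element with a single $1$ at index $k$, and for $a\in B$ and distinct $i,j\in\Z$ consider
\[ P(a,i,j)=\begin{bmatrix} a & a+e_i \\ a+e_j & a+e_i+e_j \end{bmatrix}. \]
Then $|supp(e_i)|=|supp(e_j)|=1$ and $|supp(e_i+e_j)|=|i-j|+1\ge 2$, so $P(a,i,j)$ is a $(2,4)$-parallelogram in the sense of \Cref{subsec:eMquads}. Since $\Psi$ is an isometry, \Cref{lemma:quadqi} with $K=1$ shows that $\psi(P(a,i,j))$ is again a $(2,4)$-quadrilateral, and the argument in the proof of \Cref{Prop:eMlamplighter} (applied with $S=1$) then yields
\[ \psi(a)+\psi(a+e_i+e_j)=\psi(a+e_i)+\psi(a+e_j) \qquad (\star) \]
for every $a\in B$ and every pair $i\neq j$.

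Next I will feed $(\star)$ into the characteristic-$2$ structure of $B$. Set $\phi_i(a):=\psi(a)+\psi(a+e_i)$. Rearranging $(\star)$ and using $x+x=0$ in $B$ gives $\phi_i(a)=\phi_i(a+e_j)$ for every $j\neq i$, while $\phi_i(a+e_i)=\phi_i(a)$ holds trivially because $e_i+e_i=0$. Thus $\phi_i$ is invariant under adding any generator $e_k$, and since the $e_k$ generate $B$, $\phi_i$ is constant, equal to some $c_i\in B$. A short induction on $|supp(u)|$ then shows $\psi(a+u)=\psi(a)+\sum_{k\in supp(u)} c_k$ for every $a$ and $u$, so $\psi$ is affine and in particular parallelogram preserving.

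The main obstacle is the first step: one must confirm that the tiny parallelograms $P(a,i,j)$, including those with adjacent indices $|i-j|=1$, are already inside the regime from which $(\star)$ can be deduced. This is exactly what fails as soon as $K>1$, because \Cref{lemma:quadqi} then degrades the ratio $M/\epsilon^2$ by a factor of $K^4$ and pushes these minimal parallelograms out of the window to which \Cref{Theorem: Main} applies. This quantitative breakdown is precisely why the argument cannot be extended to arbitrary quasi-isometries, which is consistent with the counterexamples promised by \Cref{thrm:counterexamples}.
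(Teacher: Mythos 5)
Your proof is correct, and its first half coincides with ours: you take the minimal parallelograms built from two single‑lamp generators, note that for a $(1,0)$-quasi-isometry Lemma \ref{lemma:quadqi} applies with $K=1$ so their images remain $(2,4)$-quadrilaterals, and invoke the claim in the proof of Proposition \ref{Prop:eMlamplighter} (with $S=1$) to obtain your identity $(\star)$; the degenerate case $i=j$ is absorbed by characteristic $2$ exactly as in our argument. Where you genuinely diverge is the globalization step. We pass from minimal parallelograms to arbitrary ones by appealing to the generation/telescoping argument of Section \ref{Section: BSandSOL} (``the parallelograms of the above form generate all possible parallelograms''), whereas you introduce $\phi_i(a)=\psi(a)+\psi(a+e_i)$, use $(\star)$ and $x+x=0$ to show each $\phi_i$ is constant, and conclude by induction that $\psi$ is affine. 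The two routes are morally the same double induction over generators, but yours is self-contained and its conclusion--- affineness of $\psi$ ---sounds stronger, although for maps of $\InfSumtwo$ it is equivalent to parallelogram preservation (set $a=0$ in the parallelogram identity to see that $\psi(\cdot)+\psi(0)$ is additive). Your closing remark correctly isolates the one delicate point, the adjacent-index parallelograms with $|i-j|=1$: whether these are literally $(2,4)$-quadrilaterals depends on an off-by-one convention for $|supp|$ that we are equally loose about, but since a $(1,0)$-quasi-isometry preserves $d_\ell$ and $d_u$ exactly, the four image points are forced to differ by single generators along each side, which makes them parallelograms in any case; and your explanation of why the argument degrades for $K>1$ matches the discussion at the end of Section \ref{subsec:large_grams}.
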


\begin{proof}
Consider the set $\Sigma$ of elements of $\InfSumtwo$  where the size of the support is equal to 1. Given two elements $w,v$ in $\Sigma$, we can consider the parallelogram 
$$\begin{bmatrix}
 a&a+w\\
 a+v&a+w+v
 \end{bmatrix}.$$
 If $w=v$ then (because $n=2$), we get $\psi(w+v+a)=\psi(2w+a)=\psi(a)$ so $\psi(a+w+v)+\psi(a)=2\psi(a)=0=\psi(a+w)+\psi(a+v)$.
  If $w\neq v$ then this is a $(\epsilon,M)$-parallelogram for $\epsilon=2^1$ and $M=2^2$ and $\psi$ is parallelogram preserving with respect to this parallelogram. 
  Every element of $\InfSumtwo$ can be written as a sum of elements of $S$, so the parallelograms of the above form generate all possible parallelograms. 
\end{proof}

Notice that as quasi-isometry constants get worse we need $M \gg \epsilon$ in order for the image quadrilateral to be a $(2^S, 2^{2S})$ quadrilateral. 
In SOL and $BS(1,n)$ this does not matter as much, since we can find a generating set $\Sigma$ such that parallelograms based on this set are all $(\epsilon,M)$-parallelograms.  However, in the lamplighter case 
the situation is different.  Pick $\epsilon, M$ such that $\log_2 M > 2 \log_2 \epsilon + 1$. 
Suppose for contradiction that there is a generating set $\Sigma$ for $\InfSumtwo$ such that any two elements $v,w \in \Sigma$ generate an $(\epsilon, M)$-parallelogram
$$P=\begin{bmatrix}
 0& w\\
 v&w+v
 \end{bmatrix}.$$
 
 Since $supp(v+w)$ is an interval that is larger than the sum of the sizes of $supp(v)$ and $supp(w)$ there must be some index $i_0$ not in $supp(v)\cup supp(w)$ so there must be another generator $z \in \Sigma$ with $i_0 \in supp(z)$ but then without loss of generality $z$ and $v$ will not generate an $(\epsilon, M)$-parallelogram.

In fact, in the next section we show that we can generate counterexamples indicating that for larger quasi-isometry constants, the map $\psi$ may not be parallelogram preserving for all parallelograms. 

\subsection{A Family of Quasi-isometries and a Counterexample}\label{subsec:counterexample}
In this section we prove Theorem \ref{thrm:counterexamples}. 
Here is a way to create many examples of maps $\psi:\InfSumtwo\to \InfSumtwo$ that are bijective and biLipschitz on both boundaries. Such a map will induce a quasi-isometry $\Psi: DL(2,2)\to DL(2,2)$.

Suppose that $S_m$ is the set of sequences of 0's and 1's of length $m$, and let $\pi$ be an element of the symmetric group on $2^m$ elements that permutes the elements of $S_m$.  This induces a map $\psi$, $\psi:\InfSumtwo\to \InfSumtwo$ by applying $\pi$ to the subsequence with indicies between 0 and $m-1$ and the identity map at all other heights. 

\begin{claim} Any such map $\psi:\InfSumtwo\to \InfSumtwo$  induced by $\pi\in S_{2^m}$ will be biLipschitz in both the upper and lower boundaries, and the biLipschitz constants of the boundary maps  will both be less than or equal to $2^m$.
\end{claim}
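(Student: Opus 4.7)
The plan is a straightforward case analysis on where the smallest differing index (for $d_\ell$) or the largest differing index (for $d_u$) of two sequences lies relative to the window $\{0,\ldots,m-1\}$ on which $\psi$ acts nontrivially. First I would observe that $\psi$ is defined by a purely combinatorial rule --- apply $\pi$ to the coordinates indexed by $\{0,\ldots,m-1\}$ and the identity elsewhere --- so the same rule extends to self-maps of each boundary copy of $\Q_2\cong\partial_\ell T_3$, viewing boundary points as bi-infinite sequences in $\Z_2$ with the appropriate tail conditions. Thus the ``induced boundary map'' is literally the same permutation-on-a-window rule.

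For the lower boundary, recall $d_\ell(p,q)=2^{-l_+}$ where $l_+$ is the smallest index at which $p$ and $q$ differ. If $l_+(p,q)<0$ or $l_+(p,q)\ge m$, then the first disagreement of $p$ and $q$ already sits outside the window, so $\psi$ leaves that coordinate untouched and $l_+(\psi(p),\psi(q))=l_+(p,q)$; in particular $d_\ell$ is preserved exactly. If instead $0\le l_+(p,q)\le m-1$, then $p$ and $q$ agree on all indices $<l_+$ (and hence on every negative index) while their restrictions to the window are distinct; since $\pi$ is a bijection, the restrictions of $\psi(p)$ and $\psi(q)$ to the window are still distinct, which forces $l_+(\psi(p),\psi(q))\in\{0,\ldots,m-1\}$ as well. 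Both of $d_\ell(p,q)$ and $d_\ell(\psi(p),\psi(q))$ therefore lie in $\{2^{0},2^{-1},\ldots,2^{-(m-1)}\}$, so their ratio is bounded between $2^{-(m-1)}$ and $2^{m-1}$.

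Combining the two cases gives a biLipschitz constant of at most $2^{m-1}\le 2^m$ for the induced map on $\partial_\ell DL(2,2)$. The argument for $\partial^u DL(2,2)$ is completely symmetric: replace $l_+$ with the largest differing index $l_-$, use $d_u(p,q)=2^{l_-}$, and let the indices $\ge m$ play the role that the negative indices played above. The only conceptual step to get right is the assertion that $\psi$ actually descends to the boundaries, which is forced by its combinatorial definition on sequences; once this is in hand, the bound $2^m$ is inevitable, because only coordinates in a window of size $m$ can ever be disturbed, so any distance that is not entirely ``seen'' by the window is preserved, and the remaining distances lie in a finite range of multiplicative width $2^{m-1}$.
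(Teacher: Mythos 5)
Your argument is correct and follows essentially the same route as the paper's: a case split on whether the first (respectively last) index of disagreement lies inside the window $\{0,\dots,m-1\}$, with the distance preserved exactly in the outside case and confined to a range of multiplicative width at most $2^m$ in the inside case via injectivity of $\pi$. Your write-up is in fact slightly more careful than the paper's (explicitly invoking bijectivity of $\pi$ to ensure the images still disagree within the window, and noting the sharper bound $2^{m-1}$), but the underlying idea is identical.
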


We will show this map is biLipschitz in the upper boundary with metric $d_u$. The argument for the lower boundary component is similar. Consider two arbitrary elements $p$ and $q$ in $\InfSumtwo$. Let $\ell_+$ be the maximal integer where $x_i=y_i$ for all $i<\ell_+$. If $\ell_+$ is either less than $0$ or greater than $m-1$, then $d_u(p,q)=d_u(\psi(p),\psi(q))$ and the claim follows. If $0\leq\ell_+<m$, then $p$ and $q$ do not match at heights between $0$ and $m$ and the permutation $\pi$ applied to the subsequence of $p$ and the subsequence of $q$ has different results. Thus there is some $0\leq j<m$ such that $\psi(x)_j\neq \psi(y)_j$. This means that the distance in $d_u$ between $p$ and $q$ has changed by at most a factor of $2^m$ under $\psi$. This implies that $\psi$ is biLipschitz in the upper boundary with biLipschitz constant less than or equal to $2^ m$.

These examples give us many possible maps on $\InfSumtwo$, each of which must induce a quasi-isometry on the Lamplighter group. 
Among these examples are examples where the map is not parallelogram preserving. 

For example consider the map  $\pi$ where $m=3$, which transposes  $100$ to $111$ and acts as the identity otherwise. In this map we see that \[\pi(100)+\pi(001)=111+001=110\]
while
\[\pi(100+001)=\pi(101)=101.\]
So this $\pi$ is not affine and hence induces a map $\psi$ on $\InfSumtwo$ which is not affine.

This family of examples of biLipschitz maps leads us to the following question.

\begin{question}
Can all bijective biLipschitz maps 
be written as compositions of maps generated by permutations in $S_m$ as in the examples above? \end{question}

\bibliographystyle{alpha}
\bibliography{references.bib}

\end{document}